\documentclass[10pt,a4paper]{amsart}

\usepackage[utf8]{inputenc}
\usepackage{xcolor}
 
\usepackage{amsthm}
\usepackage{graphicx,amssymb,amsmath,latexsym}
\usepackage{caption}
\usepackage{subcaption}

\theoremstyle{plain}
\newtheorem{theorem}{Theorem}[section]
\newtheorem{lemma}[theorem]{Lemma}
\newtheorem{prop}[theorem]{Proposition}

\theoremstyle{definition}
\newtheorem{definition}[theorem]{Definition}
\newtheorem{remark}[theorem]{Remark}
\newtheorem{Example}[theorem]{Example}

\begin{document}
\title{Twisted virtual braids and twisted links}

\author[K.~Negi]{Komal NEGI}
\address[K.~Negi]{Department of Mathematics, Indian Institute of Technology Ropar, Punjab, India.}
\email{komal.20maz0004@iitrpr.ac.in}

\author[M.~Prabhakar]{Madeti PRABHAKAR}
\address[M.~Prabhakar]{Department of Mathematics, Indian Institute of Technology Ropar, Punjab, India.}
\email{prabhakar@iitrpr.ac.in}

\author[S.~Kamada]{Seiichi KAMADA}
\address[S.~Kamada]{Department of Mathematics, Osaka University, Toyonaka, Osaka 560-0043, Japan}
\email{kamada@math.sci.osaka-u.ac.jp}

\date{}

\keywords{Twisted knots, twisted virtual braids, Alexander theorem, Markov theorem.}

\subjclass[2020]{57K10, 57K12}

\maketitle

\begin{abstract}
Twisted knot theory introduced by M. Bourgoin is a generalization of knot theory. It leads us to the notion of twisted virtual braids.  In this paper we show theorems for twisted links corresponding to the Alexander theorem and the Markov theorem in knot theory.  We also provide a group presentation and a reduced group presentation 
of the twisted virtual braid group.


\end{abstract}


\section{Introduction}
M.~O.~Bourgoin \cite{1} introduced twisted knot theory as a generalization of knot theory. 
Twisted link diagrams are link diagrams on $\mathbb{R}^2$ possibly with some crossings called virtual crossings and bars which are short arcs intersecting the arcs of the diagrams. Twisted links are diagrammatically defined as twisted link diagrams modulo isotopies of $\mathbb{R}^2$ and local moves called {\it extended Reidemeister moves} which are  
Reidemeister moves (R1, R2, R3), virtual Reidemeister moves (V1, V2, V3, V4) and twisted moves (T1, T2, T3) depicted in Figure~\ref{tm}.  Twisted links correspond to stable equivalence classes of links in oriented three-manifolds which are orientation I-bundles over closed but not necessarily orientable surfaces.  

Twisted links are analogous to virtual links introduced by L.~H.~Kauffman \cite{2}. Virtual link diagrams are link diagrams on $\mathbb{R}^2$ possibly with some virtual crossings. Virtual links are defined as virtual link diagrams modulo isotopies of $\mathbb{R}^2$ and local moves called {\it generalized Reidemeister moves} which are  
Reidemeister moves (R1, R2, R3) and virtual Reidemeister moves (V1, V2, V3, V4) depicted in Figure~\ref{tm}.  Virtual  links correspond to stable equivalence classes of links in oriented three-manifolds which are orientation I-bundles over closed oriented surfaces.

 \begin{figure}[ht]
  \centering
    \includegraphics[width=12cm,height=7.5cm]{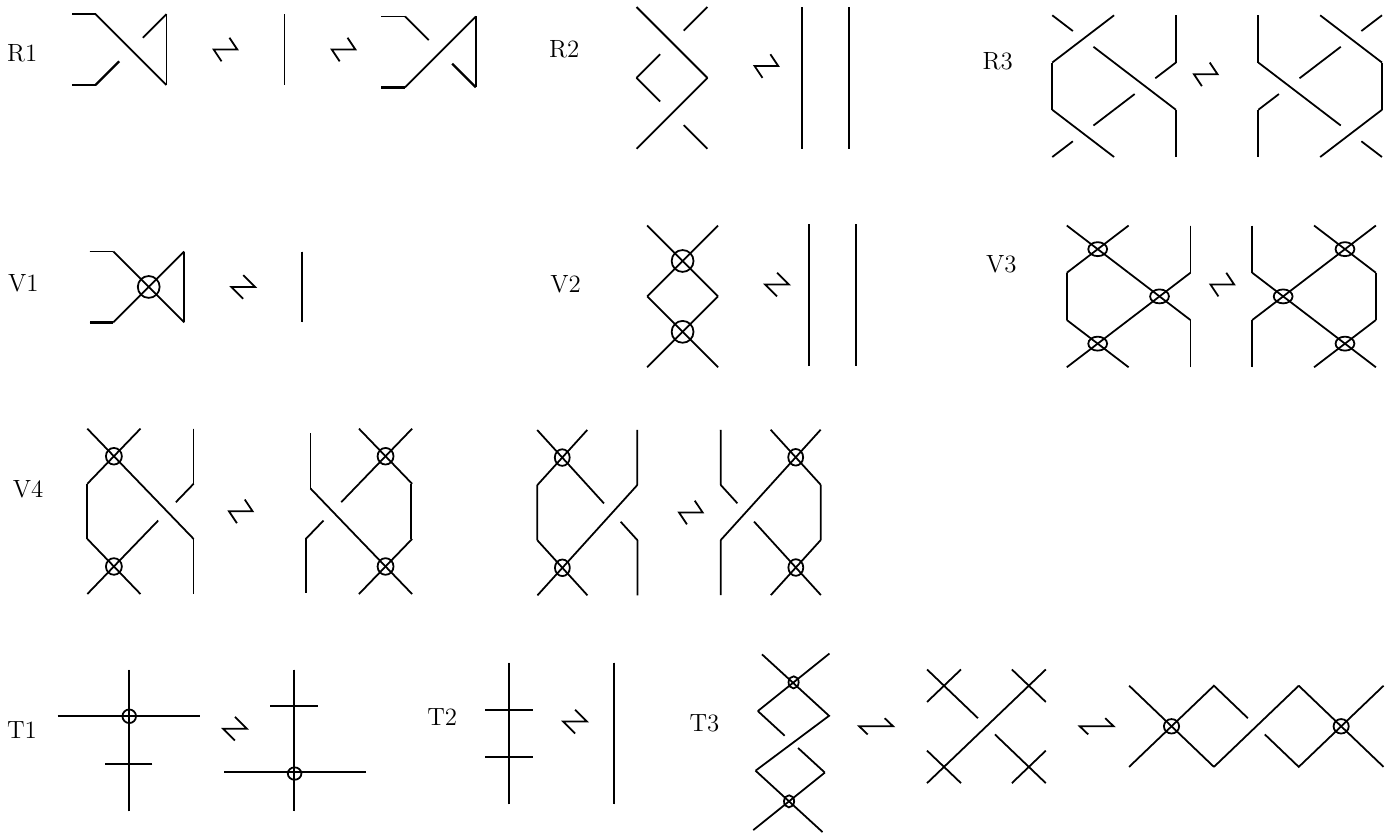}
        \caption{Extended Reidemeister moves.}
        \label{tm}
        \end{figure}

The Alexander theorem states that every link is represented as the closure of a braid, and the Markov theorem states that such a braid is unique modulo certain moves so called Markov moves.  In virtual knot theory, analogous theorems are established in \cite{kl, sk}. 

In this paper we show theorems for twisted links corresponding to the Alexander theorem and the Markov theorem.  We also provide a group presentation and a reduced group presentation 
of the twisted virtual braid group.

This article is organized as follows. 
In Section~\ref{sect:braid}, 
we state the definition of the twisted virtual braid group and provide a group presentation of the group. 
In Section~\ref{sect:Alexander}, the Alexander theorem for twisted links is shown by introducing a method of braiding a given twisted link diagram, which we call the braiding process. 
In Section~\ref{sect:Markov}, we give the statement of the Markov theorem for twisted links and prove it.  In Section~\ref{sect:exchange},  virtual exchange moves are discussed. 
In Section~\ref{sect:reduced}, we give a reduced presentation of the twisted virtual braid group, and concluding remarks.   


\section{The twisted virtual braid group}
\label{sect:braid}

Let $n$ be a positive integer.  

\begin{definition}
A {\it twisted virtual braid diagram} on $n$ strands (or of degree $n$) 
is a union of $n$ smooth or polygonal curves, which are called {\it strands}, in $\mathbb{R}^2$ connecting points $(i,1)$ with points $(q_i,0)$ $(i=1, \dots, n)$, where $(q_1, \ldots, q_n)$ is a permutation of the numbers $(1, \ldots, n)$, such that these curves are monotonic with respect to the second coordinate and intersections of the curves are transverse double points equipped with information as a positive/negative/virtual crossing and 
strings may have {\it bars} by which we mean short arcs intersecting the strings transversely.
See Figure~\ref{exa}, where the five crossings are 
negative, positive, virtual, positive and positive from the top. 
\end{definition}

\begin{figure}[h]
  \centering
    \includegraphics[width=2cm,height=3cm]{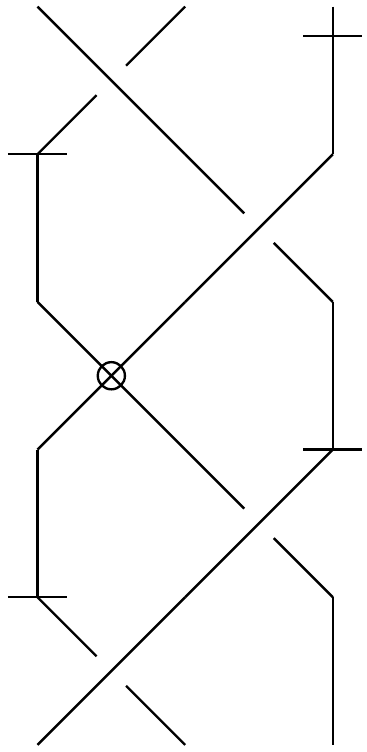}
        \caption{A twisted virtual braid diagram on 3 strands.}
        \label{exa}
        \end{figure}

Here is an alternative definition. 

\begin{definition}
Let $E$ be $[0, n+1] \times [0, 1]$ and let $p_2 : E \to [0, 1]$ be the second factor projection. A {\it twisted virtual braid diagram} of $n$ strands (or of degree $n$) is an immersed 1-manifold $b = a_1 \cup \ldots \cup a_n$ in E, where $a_1, \ldots, a_n$ are embedded arcs, called {\it strands},  
possibly with {\it bars} by which we mean short arcs intersecting the strands transversely, satisfying the following conditions (1)--(5):  
\begin{itemize}
    \item[(1)] $\partial b= \{1, 2, \dots, n\} \times \{0, 1\} \subset E$.
    \item[(2)] For each $i \in \{1, \ldots, n\}$, $p_2|_{a_i}: a_i \to [0, 1]$ is a homeomorphism.  
    \item[(3)] The set of multiple points of the strands consists of transverse double points, which are referred as {\it crossings} of the diagram.  
    \item[(4)] Each crossing is equipped with information of a positive crossing, a negative crossing or a virtual crossing. 
    \item[(5)] Every bar avoids the crossings. 
\end{itemize}
Let $X(b)$ denote the set of crossings of $b$ and the points on the strands where bars intersect with.  
A twisted virtual braid diagram is said to be {\it good} if it satisfies the following condition. 
\begin{itemize}
    \item[(6)] The restriction map $p_2 |_{X(b)}: X(b) \to [0, 1]$ is injective.
\end{itemize}    
\end{definition}

The twisted virtual braid diagram depicted in Figure~\ref{exa} is good. 
On the other hand, the twisted virtual braid diagram depicted in Figure~\ref{exaB} is not good, since there exist a pair of bars lying in $p_2^{-1}(y)$ for some $y \in [0,1]$, or  since there exists a virtual crossing and a bar lying in $p_2^{-1}(y')$ for some $y' \in [0,1]$.

\begin{figure}[h]
  \centering
    \includegraphics[width=2cm,height=3cm]{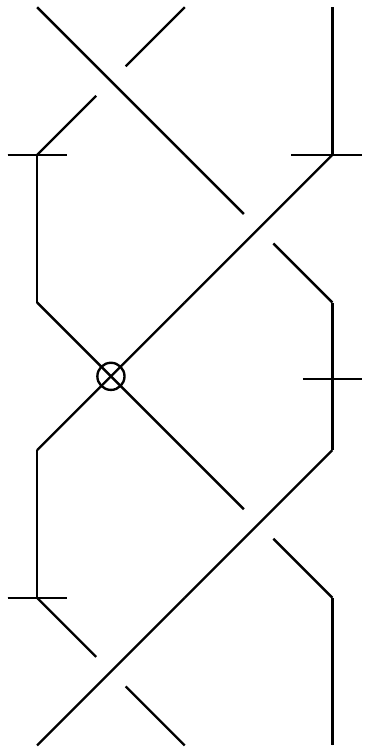}
        \caption{A twisted virtual braid diagram which is not good.}
        \label{exaB}
        \end{figure}

\begin{definition}
Two twisted virtual braid diagrams $b$ and $b'$ of degree $n$ are {\it equivalent} if there is a finite sequence of twisted virtual braid diagrams of degree $n$, say $b_0, b_1, \dots, b_m$, with $b=b_0$ and $b'=b_m$ such that for each $j = 1, \dots, m$, $b_j$ is obtained from $b_{j-1}$ by one of the following: 
\begin{itemize}
    \item An isotopy of $E$ keeping the conditions (1)--(5) of a twisted virtual braid diagram.  
    \item An extended Reidemeister move.  
\end{itemize}   
     A {\it twisted virtual braid} is an equivalence class of twisted virtual braid diagrams.  
\end{definition}

The set of twisted virtual braids forms a group, where the product is defined by the concatenation similar to the braid group such that $b b'$ is $b$ on $b'$ when we draw the braid diagram vertically.
The twisted virtual braid group is denoted by $TVB_n$.  

Let $\sigma_i$, $\sigma_i^{-1}$, $v_i$ $(i=1, \dots, n-1)$ and $\gamma_i$ $(i=1, \dots, n)$ be 
twisted virtual braid diagrams depicted in Figure~\ref{gen}.
Twisted virtual braids represented by them will be also denoted by the same symbols.
The group $TVB_n$ is generated by $\sigma_i$, $v_i$ $(i=1, \dots, n-1)$ and $\gamma_i$ $(i=1, \dots, n)$, which we call {\it standard generators}.  

\begin{figure}[h]
  \centering
    \includegraphics[width=12cm,height=2.5cm]{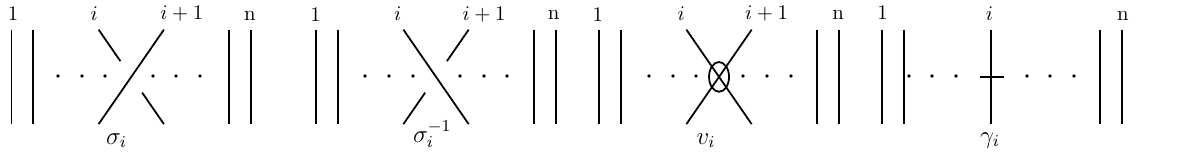}
        \caption{Generators of the group of twisted virtual braids.}
        \label{gen}
        \end{figure}

Figure~\ref{bmoves} shows classical braid moves, corresponding to R2 and R3. Figure~\ref{vbmoves} shows virtual braid moves, corresponding to V2, V3, and V4. 
(There are some other moves corresponding to R3 and V4. However, it is well known that those moves are equivalent to the moves in the figure, cf. \cite{kl}.) 

\begin{figure}[h]
  \centering
    \includegraphics[width=8cm,height=3.5cm]{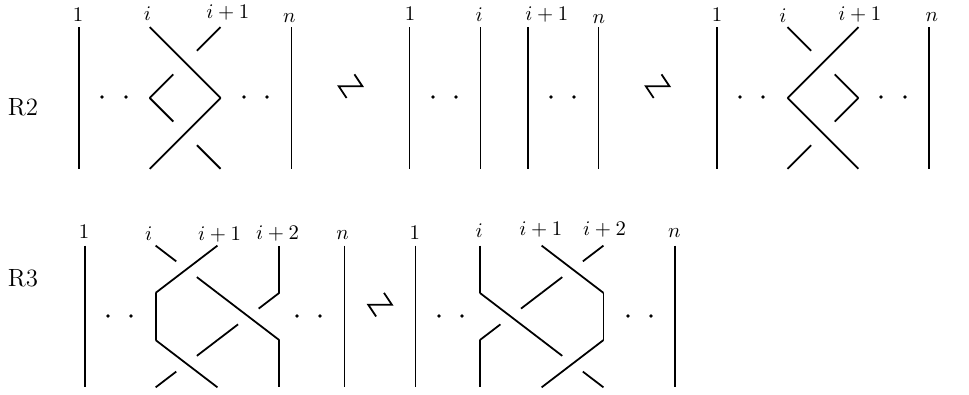}
        \caption{Classical braid moves.}
        \label{bmoves}
        \end{figure}  
        
\begin{figure}[h]
  \centering
    \includegraphics[width=9cm,height=3cm]{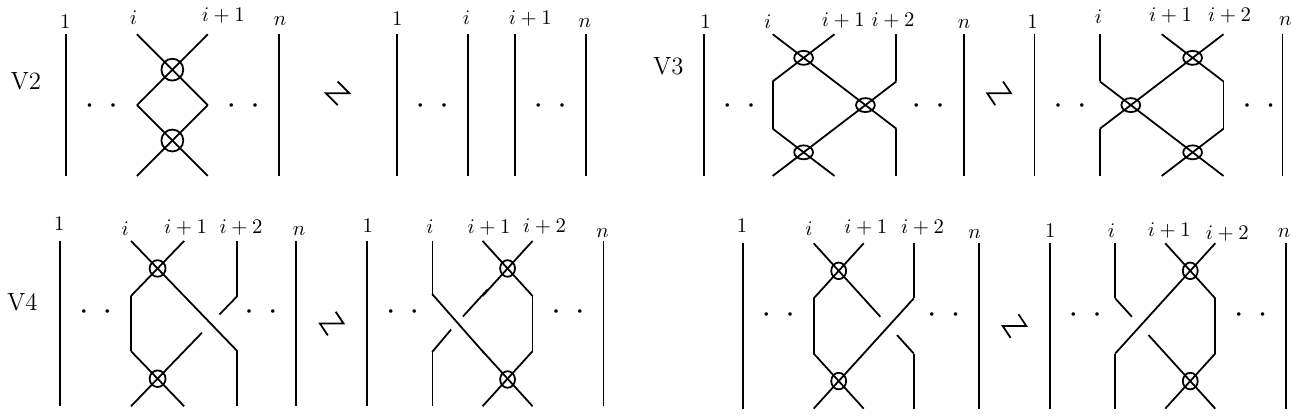}
        \caption{Virtual braid moves.}
        \label{vbmoves}
        \end{figure}  
        
 \begin{figure}[h]
  \centering
    \includegraphics[width=9cm,height=4cm]{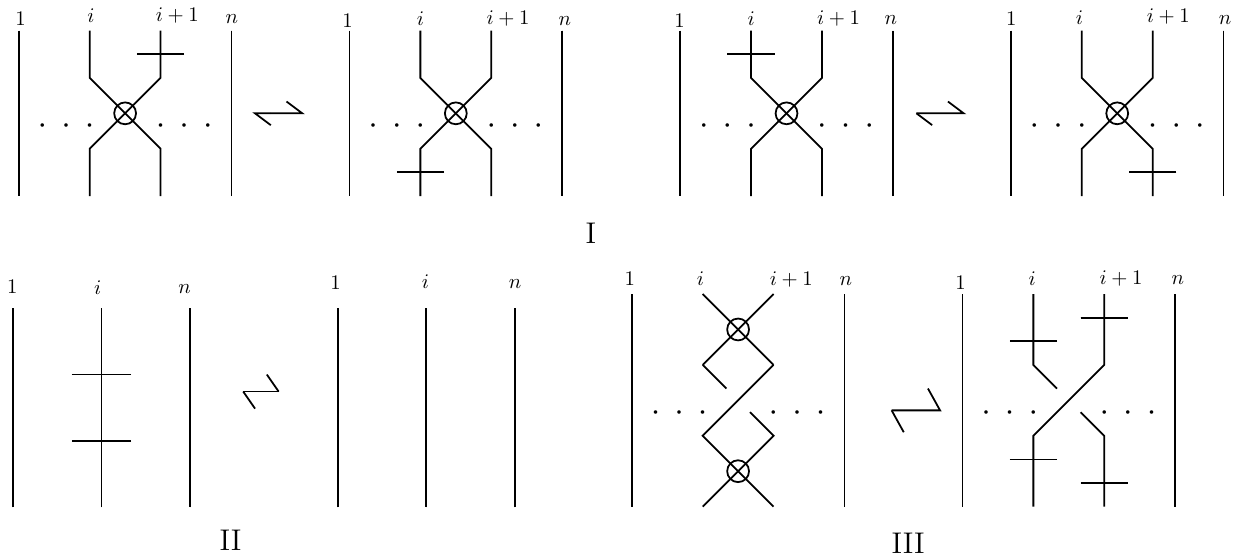}
        \caption{Twisted braid moves.}
        \label{moves}
        \end{figure}  

We call the two moves depicted in the top row of Figure~\ref{moves} {\it twisted braid moves of type ${\rm I}$}, 
and the move on the left of the second row a {\it twisted braid move of type ${\rm II}$}.  
The move on the right of the bottom is called a {\it twisted braid move of type ${\rm III}$} or 
{\it of type ${\rm III(+)}$}.  When we replace the positive crossings with negative ones, it is called 
a {\it twisted braid move of type ${\rm III}$} or {\it of type ${\rm III(-)}$}. 

Braid moves corresponding to extended Reidemeister moves are classical braid moves, virtual braid moves and twisted braid moves.

\begin{theorem}\label{thm:StandardPresentation}
The twisted virtual braid group $TVB_n$ is generated by standard generators, 
$\sigma_i$, $v_i$ $(i=1, \dots, n-1)$ and  $\gamma_i$ $(i=1, \dots, n)$, and  
the following relations are defining relations, where $e$ denotes the identity element:
    \begin{align}
        \sigma_i \sigma_j & = \sigma_j \sigma_i  & \text{ for } & |i-j| > 1; \label{rel-height-ss}\\
        \sigma_i \sigma_{i+1} \sigma_i & = \sigma_{i+1} \sigma_i \sigma_{i+1} & \text{ for } & i=1,\ldots, n-2; \label{rel-sss}\\
        v_i^2 & = e  & \text{ for } & i=1,\ldots, n-1; \label{rel-inverse-v}\\
        v_i v_j & = v_j v_i & \text{ for } & |i-j| > 1 ; \label{rel-height-vv}\\
        v_i v_{i+1} v_i & = v_{i+1} v_i v_{i+1} & \text{ for } & i=1,\ldots, n-2; \label{rel-vvv}\\
        \sigma_i v_j & = v_j \sigma_i &  \text{ for } & |i-j| >1  ; \label{rel-height-sv}\\
        v_i \sigma_{i+1} v_i & = v_{i+1} \sigma_i v_{i+1} & \text{ for } & i=1,\ldots, n-2; \label{rel-vsv}\\
        \gamma_i^2 & = e & \text{ for } & i=1,\ldots, n; \label{rel-inverse-b}\\  
        \gamma_i \gamma_j & = \gamma_j \gamma_i   & \text{ for } & i,j=1,\ldots, n; \label{rel-height-bb} \\
        \gamma_j v_i & = v_i \gamma_j & \text{ for } & j\neq i, i+1; \label{rel-height-bv}\\
        \sigma_i\gamma_j & = \gamma_j\sigma_i & \text{ for } & j\neq i, i+1; \label{rel-height-sb}\\
        \gamma_{i+1} v_i & = v_{i} \gamma_i & \text{ for } & i=1,\ldots, n-1; \label{rel-bv} \\
        v_{i} \sigma_i v_{i} & = \gamma_{i+1} \gamma_i \sigma_{i} \gamma_i \gamma_{i+1} & \text{ for } &  i=1,\ldots, n-1. \label{rel-twist-III}
    \end{align}
\end{theorem}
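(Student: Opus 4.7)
The proof follows the standard two-step scheme for presentations of diagrammatic braid groups: (a) verify that every listed relation holds in $TVB_n$, and (b) verify that the relations are complete, i.e.\ any equivalence of twisted virtual braid diagrams can be realised word-wise by a finite sequence of the listed relations applied to representative words.

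For (a), I would check each relation diagrammatically. The classical relations \eqref{rel-height-ss} and \eqref{rel-sss} come from R2 and R3 applied to braid diagrams; the virtual relations \eqref{rel-inverse-v}--\eqref{rel-vvv} come from V1--V3; the mixed relations \eqref{rel-height-sv} and \eqref{rel-vsv} come from V4. Among the bar relations, \eqref{rel-inverse-b} is the T1 move; the height-commutation relations \eqref{rel-height-bb}, \eqref{rel-height-bv}, \eqref{rel-height-sb} follow from planar isotopies once the corresponding local pieces are placed in disjoint vertical slabs; \eqref{rel-bv} is T2; and \eqref{rel-twist-III} is T3, where the conjugation by $\gamma_i\gamma_{i+1}$ records the two bars that are pushed across the classical crossing.

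For (b), I would first show that every twisted virtual braid admits a \emph{good} representative, obtained by a small vertical perturbation that separates the heights of the elements of $X(b)$. A good diagram decomposes canonically into a product of standard generators by slicing $E$ horizontally between consecutive heights in $p_2(X(b))$, yielding a word $w_b$ depending only on the perturbation. Given an equivalence between two good diagrams $b$ and $b'$, a general-position argument expresses it as a finite sequence of two types of elementary steps: (I) a height swap between two consecutive elements $x,y\in X(b)$, and (II) an application of a single extended Reidemeister move inside a small good disc, preceded and followed by ambient isotopy. For (I), if $x$ and $y$ involve disjoint strand-indices the swap is exactly one of the commutation relations \eqref{rel-height-ss}, \eqref{rel-height-vv}, \eqref{rel-height-sv}, \eqref{rel-height-bb}, \eqref{rel-height-bv}, \eqref{rel-height-sb}; when they share a strand, a combination of these together with \eqref{rel-bv} realises the swap. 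For (II), every extended Reidemeister move between braid diagrams is, modulo height swaps, one of the braid moves displayed in Figures~\ref{bmoves}, \ref{vbmoves}, and \ref{moves}, which correspond precisely to the non-height relations \eqref{rel-sss}, \eqref{rel-inverse-v}, \eqref{rel-vvv}, \eqref{rel-vsv}, \eqref{rel-inverse-b}, \eqref{rel-bv}, \eqref{rel-twist-III}; note that R1 and V1 have no braid-level counterpart since a strictly monotone strand admits no kink.

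The main obstacle lies in the completeness step, specifically in the bookkeeping needed to convert an arbitrary extended Reidemeister move into ``braid form.'' Making the small move-disc into a good configuration may force nearby crossings or bars to be displaced, and one must verify uniformly that every such auxiliary reconfiguration is expressible via the commutation relations together with \eqref{rel-bv}. The twisted move of type III, encoded in \eqref{rel-twist-III}, is the most delicate case: the conjugation by $\gamma_i\gamma_{i+1}$ must simultaneously account for the two bars pushed across the crossing, and one has to check that every alternative placement of the participating bars on the two strands yields a relation derivable from \eqref{rel-twist-III} together with the involution and commutation relations \eqref{rel-inverse-b}, \eqref{rel-height-bb}, \eqref{rel-height-sb}, \eqref{rel-height-bv}, and the bar--virtual relation \eqref{rel-bv}.
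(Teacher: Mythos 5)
Your proposal follows essentially the same two-step scheme as the paper's proof: good diagrams yield preferred words in the standard generators and their inverses, isotopies of $E$ decompose into height swaps that realize the commutation relations, and each extended Reidemeister move between good diagrams realizes one of the remaining relations. Two local slips should be corrected, though neither sinks the argument. First, you have interchanged the names T1 and T2: the involution $\gamma_i^2=e$ of \eqref{rel-inverse-b} comes from the bar-cancellation move T2, while the bar--virtual-crossing relation $\gamma_{i+1}v_i=v_i\gamma_i$ of \eqref{rel-bv} comes from T1 (incidentally, the paper's own proof lists R2, R3, V2, V3, V4, T1, T3 but omits T2, which is the move that accounts for \eqref{rel-inverse-b}; your inclusion of that relation in step (II) is therefore right even if the label is wrong). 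Second, in your step (I) the claim that a height swap of two consecutive elements of $X(b)$ \emph{sharing} a strand is ``realised by a combination of the commutation relations together with \eqref{rel-bv}'' is not correct as stated: such a swap (for instance, a bar on strand $i$ past an adjacent crossing $\sigma_i$) is not an isotopy of the braid diagram and genuinely changes the element of $TVB_n$, since $\sigma_i$ and $\gamma_j$ commute only for $j\neq i,i+1$. The point is rather that a correct general-position argument for an isotopy produces \emph{only} swaps of elements with disjoint strand-support, together with pairs of bars (covered by \eqref{rel-height-bb}), and these are exactly the commutation relations \eqref{rel-height-ss}, \eqref{rel-height-vv}, \eqref{rel-height-sv}, \eqref{rel-height-bb}, \eqref{rel-height-bv}, \eqref{rel-height-sb} and their $\sigma^{\pm1}$ variants; no shared-strand swap needs to be, or can be, realized there. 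With these corrections your outline matches the paper's proof.
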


\begin{remark} 
Using $(\ref{rel-inverse-v})$, we see that relations $(\ref{rel-vsv})$ and $(\ref{rel-bv})$ are equivalent the following $(\ref{relC-vsv})$ and $(\ref{relC-vb})$, respectively: 
    \begin{align}
      \sigma_{i+1} & = v_i v_{i+1} \sigma_i v_{i+1} v_i & \text{ for } & i=1,\ldots, n-2, \label{relC-vsv} \\ 
      \gamma_{i+1} & = v_i \gamma_i v_i & \text{ for } & i=1,\ldots, n-1.  \label{relC-vb}
     \end{align} 
\end{remark}

\begin{remark} 
There are two kinds of twisted braid moves of type ${\rm I}$ as shown in Figure~\ref{moves}. The left one corresponds to relations $(\ref{rel-bv})$ and the right one to $(\ref{rel-vb})$: 
    \begin{align}
       \gamma_i v_i  & = v_i \gamma_{i+1}  & \text{ for } & i=1,\ldots, n-1.  \label{rel-vb}
     \end{align}
Using $(\ref{rel-inverse-v})$, we see that relations $(\ref{rel-bv})$ are equivalent to $(\ref{rel-vb})$. \end{remark}

\begin{remark}
There are two kinds of twisted braid moves of ${\rm III}$; one is type ${\rm III(+)}$ as shown in Figure~\ref{moves}
and the other is type ${\rm III(-)}$.  The former 
corresponds to relations $(\ref{rel-twist-III})$ 
and the latter to $(\ref{rel-twist-III-negative})$: 
    \begin{align}
v_{i} \sigma_i^{-1} v_{i} & = \gamma_{i+1} \gamma_i \sigma_{i}^{-1} \gamma_i \gamma_{i+1} 
& \text{ for } &  i=1,\ldots, n-1.      \label{rel-twist-III-negative}
    \end{align}
Using $(\ref{rel-inverse-v})$ and $(\ref{rel-inverse-b})$, we see that relations $(\ref{rel-twist-III})$ are equivalent to $(\ref{rel-twist-III-negative})$.    
\end{remark}

\begin{proof} 
Note that the inverse elements of $v_i$ and $\gamma_i$ in $TVB_n$ are themselves. 
Let $\mathcal{S}$ be the set of standard generators of $TVB_n$ and let ${\mathcal{S}}^*$ be the set of standard generators and their inverse elements of $TVB_n$: 
    \begin{align*}
{\mathcal{S}}    &= \{ \sigma_i, v_i \mid i=1, \dots, n-1 \} \cup \{ \gamma_i \mid i=1, \dots, n\}, \\ 
{\mathcal{S}}^*  &= \{ \sigma_i, \sigma_i^{-1}, v_i \mid i=1, \dots, n-1 \} \cup \{ \gamma_i \mid i=1, \dots, n\}.
    \end{align*}
Let $b$ be a twisted virtual braid diagram. 
When it is good, it is presented uniquely as a concatenation of elements of ${\mathcal{S}}^*$, which we call a {\it preferred word} of $b$. 
When it is not good, one can modify it slightly by an isotopy of $E$ keeping the condition of a twisted virtual braid diagram to become good.
Thus, ${\mathcal{S}}$ generates the group $TVB_n$.  

Let $b$ and $b'$ are good twisted virtual braid diagrams.
Suppose that $b'$ is obtained from $b$ by an isotopy of $E$ keeping the condition of a twisted virtual braid diagram.
Then they are related by a finite sequence of changing heights of a pair of points in $X(b)$.
A single height change of a pair of such points corresponds to one of relations 
(\ref{rel-height-ss}), (\ref{rel-height-vv}),   (\ref{rel-height-sv}), (\ref{rel-height-bb}),  (\ref{rel-height-bv}), 
 (\ref{rel-height-sb}) and variants of (\ref{rel-height-ss}),   (\ref{rel-height-sv}) and  (\ref{rel-height-sb}) 
 with $\sigma_i$ replaced by $\sigma_i^{-1}$ and/or $\sigma_j$ replaced by $\sigma_j^{-1}$.  Note that the variants are consequences of the original relations up to relations (\ref{rel-inverse-v}) and (\ref{rel-inverse-b}). 
Thus, we see that the preferred words of $b$ and $b'$ are congruent modulo 
  relations  (\ref{rel-height-ss}), (\ref{rel-height-vv}),   (\ref{rel-height-sv}), (\ref{rel-height-bb}),  (\ref{rel-height-bv}), 
 (\ref{rel-height-sb}) and  relations (\ref{rel-inverse-v}) and (\ref{rel-inverse-b}).  
 
Suppose that $b'$ is obtained from $b$ by an extended Reidemeister move.
When the move is R2, the change of preferred words corresponds to 
$\sigma_i^{\epsilon} \sigma_i^{-\epsilon} = \sigma_i^{-\epsilon} \sigma_i^{\epsilon}$ $(\epsilon \in \{\pm 1\})$, which is a trivial relation.
When the move is R3, it is well known that the change of preferred words corresponds to 
a relation which is a consequence of relations (\ref{rel-sss}).
When the move is V2, the change of preferred words corresponds to relations (\ref{rel-inverse-v}).
When the move is V3, the change of preferred words corresponds to relations (\ref{rel-vvv}).
When the move is V4, we may assume that it is the move as in Figure~\ref{bmoves}, which 
corresponds to relations (\ref{rel-vsv}).
When the move is T1, the change of preferred words corresponds to relations (\ref{rel-bv}) or (\ref{rel-vb}).
When the move is T3, the change of preferred words corresponds to relations (\ref{rel-twist-III}) or (\ref{rel-twist-III-negative}). 
Therefore we see that the preferred words of $b$ and $b'$ are congruent each other modulo all relations 
(\ref{rel-height-ss})--(\ref{rel-twist-III}).  

Since all relations (\ref{rel-height-ss})--(\ref{rel-twist-III}) are valid in the group $TVB_n$, these relations are defining relations.  
\end{proof}

\begin{remark}
The twisted virtual group $TVB_n$ is different from the ring group (\cite{BH}) or the extended welded braid group (\cite{Damiani2017B}).  
Brendle and Hatcher \cite {BH} discussed the space of configurations of $n$ unlinked Euclidean circles, called {\it rings}, whose fundamental group is the {\it ring group} $R_n$.
They showed that the ring group is isomorphic to the {\it motion group} of the trivial link of $n$ components in the sense of Dahm \cite{Dahm}.
The ring group has a finite index subgroup isomorphic to the {\it braid-permutation group}, also called the {\it welded braid group}, introduced by Fenn, Rim{\' a}nyi and Rourke \cite{FRR}.
Damiani~\cite{Damiani2017A} studied the ring group from various points of view. 
In particular, she introduced in \cite {Damiani2017B} the notion of the {\it extended welded braid group} defined by using diagrams motivated from the work of Satoh~\cite{Satoh}.
Damiani's extended welded braid group is isomorphic to the ring group.  

The twisted virtual braid group $TVB_n$ is different from the ring group and the extended welded braid group for $n>2$, since they admit a relation $v_1 \sigma_2 \sigma_1 = \sigma_2 \sigma_1 v_2$, which is not allowed in the twisted virtual braid group $TVB_n$.  
\end{remark}


\section{Braid presentation of twisted links}
\label{sect:Alexander}

The closure of a twisted virtual braid (diagram) is defined by a similar way for a classical braid.

\begin{Example}
The closure of a twisted virtual braid diagram is shown in Figure~\ref{one}.
\begin{figure}[ht]
  \centering
    \includegraphics[width=2.5cm,height=3cm]{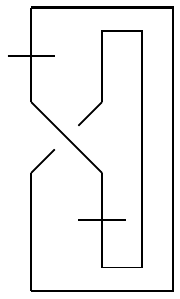}
        \caption{The closure of braid $\gamma_1\sigma_1^{-1}\gamma_2$.}
        \label{one}
        \end{figure}
\end{Example}

In this section we show that every twisted link is represented by the closure of a twisted virtual braid diagram (Theorem~\ref{prop:AlexanderB}).  

\subsection{Gauss data}

For a twisted link diagram $K$, we prepare some notation:
\begin{itemize}
    \item Let $V_R(K)$ be the set of all real crossings of $K$.
    \item Let $S(K)$ be the map from $V_R(K)$ to the set $\{+1,-1 \}$ assigning the signs to real crossings.
    \item Let $B(K)$ be the set of all bars in $K$.
    \item Let $N(v)$ be a regular neighborhood of $v$, where $v \in V_R(K) \cup B(K)$.
    \item For $c \in V_R(K)$, we denote by $c^{(1)}, c^{(2)}, c^{(3)}$, and $c^{(4)}$ the four points of $\partial N(c) \cap K$  as depicted in Figure~\ref{r}. 
    \begin{figure}[h]
  \centering
    \includegraphics[width=9cm,height=3cm]{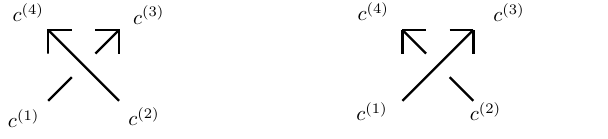}
        \caption{Boundary points of $N(c) \cap K$.}
        \label{r}
        \end{figure}
     \item For $\gamma \in B(K)$, we denote by $\gamma^{(1)}$ and $\gamma^{(2)}$ the two points of $\partial N(\gamma) \cap K$ as depicted in Figure \ref{b}.
      \begin{figure}[h]
  \centering
    \includegraphics[width=1cm,height=3cm]{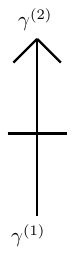}
        \caption{Boundary points of $N(\gamma) \cap K$.}
        \label{b}
        \end{figure}
     \item Put $W=W(K)=Cl(\mathbb{R}^2 \setminus \cup_{v\in V_R(K) \cup B(K)} N(v))$, where $Cl$ means the closure.
     \item Let $V^{\partial}_R(K) = \{c^{(j)} | c \in V_R(K), 1\leq j \leq 4\}$, and $B^{\partial}(K) = \{\gamma^{(j)} | \gamma \in B(K), 1\leq j \leq 2\}.$ 
     \item Let $K |_W$ be the restriction of $K$ to $W$, which is a union of some oriented arcs and loops generically immersed in $W$ such that the double points are virtual crossings of $K$, and the set of boundary points of the arcs is  the set $V^{\partial}_R(K) \cup B^{\partial}(K).$
     \item Let $\mu(K)$ be the number of components of $K$.
     \item Define a subset $G(K)$ of $(V^{\partial}_R(K) \cup B^{\partial}(K))^2$ such that $(a,b) \in G(K)$ if and only if $K |_W$ has an oriented arc starting from $a$ and ends at $b$.
\end{itemize}

The {\it Gauss data} of a twisted link diagram $K$ is the quintuple 
$$(V_R(K), S(K), B(K), G(K), \mu(K)).$$ 

\begin{Example}
Let $K$ be a twisted link diagram depicted in Figure~\ref{gd}.  When we name the real crossings $c_1$ and $c_2$ as in the figure, the Gauss data is  
$$(\{c_1,c_2\}, \{+1,+1\}, \{\gamma_1\},\{ (c_1^{(4)}, c_2^{(2)}), (c_2^{(3)}, c_1^{(2)}), (c_2^{(4)}, \gamma_1^{(1)}), (\gamma_1^{(2)}, c_1^{(1)}), (c_1^{(3)}, c_2^{(1)}) \},1).$$
 
\begin{figure}[h]
  \centering
    \includegraphics[width=7cm,height=4.5cm]{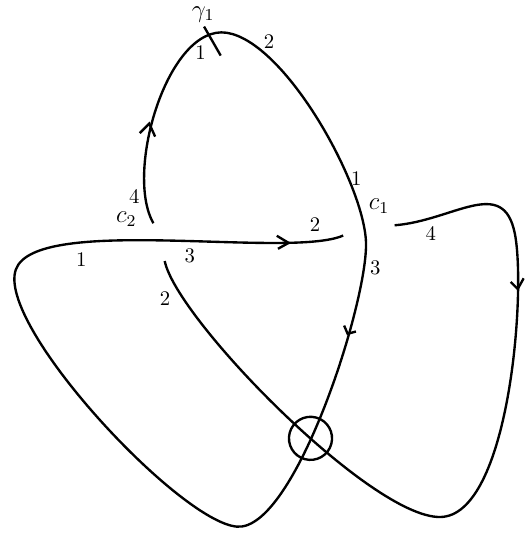}
        \caption{A twisted link diagram with one bar.}
        \label{gd}
        \end{figure}
\end{Example}

We say that two twisted link diagrams $K$ and $K'$ have {\it the same Gauss data} if 
$\mu (K) = \mu (K')$ and there exists a bijection $g:V_R(K) \cup B(K) \to V_R(K) \cup B(K)$ satisfying 
the following conditions:
\begin{itemize}
    \item $g(V_R(K)) = V_R(K)$, and $g(B(K))= B(K)$.   
    \item $g$ preserves the signs of real crossings; $S(K)(c) =S(K')(g(c))$ for $c \in V_R(K)$.  
    \item $(a,b) \in G(K)$ if and only if $(g^{\partial}(a), g^{\partial}(b)) \in G(K')$, where  $g^{\partial}: V^{\partial}_R(K) \cup B^{\partial}(K) \to V^{\partial}_R(K') \cup B^{\partial}(K')$ is the bijection induced from $g$, i.e., $g^{\partial}(c^{(j)}) = (g(c))^{(j)}$ for $c \in V_R(K), 1\leq j \leq 4$ and 
    $g^{\partial}(\gamma^{(j)}) = (g(\gamma))^{(j)}$ for $\gamma \in B(K), 1\leq j \leq 2$.
\end{itemize}

Let $K$ be a twisted link diagram and $W=W(K)=Cl(\mathbb{R}^2 \setminus \cup_{v\in V_R(K) \cup B(K)} N(v))$ as before. Suppose that $K'$ is a twisted link diagram with the same Gauss data with $K$. Then by an isotopy of $\mathbb{R}^2$ we can move $K'$ such that 
\begin{itemize}
    \item $K$ and $K'$ are identical in $N(v)$ for every $v \in V_R(K) \cup B(K)$,
    \item $K'$ has no real crossings and bars in $W$, and
    \item there is a bijection between the arcs/loops of $K|_W$ and those of $K'|_W$ with respect to the endpoints of the arcs.
\end{itemize}
In this situation, we say that $K'$ is obtained from $K$ {\it by replacing $K|_W$}.

\begin{lemma}\label{lemma:Same Gauss Data}
Let $K$ and $K'$ be twisted link diagrams, and let $W=W(K)=Cl(\mathbb{R}^2 \setminus \cup_{v\in V_R(K) \cup B(K)} N(v))$. 

  (1) If $K'$ is obtained from $K$ by replacing $K|_W$, then they are related by a finite sequence of isotopies of $\mathbb{R}^2$ with support $W$ and V1, V2, V3, V4, and T1 moves.  

  (2) If two twisted link diagrams $K$ and $K'$ have the same Gauss data, then $K$ is equivalent to $K'$.
\end{lemma}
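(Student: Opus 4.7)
The plan is to prove part (1) directly and then deduce part (2) from it. Under the hypothesis of (1), the diagrams $K$ and $K'$ agree pointwise in every crossing/bar neighborhood $N(v)$, so $K|_W$ and $K'|_W$ are generically immersed oriented 1-manifolds in $W$ with (i) identical sets of boundary points $V^{\partial}_R(K) \cup B^{\partial}(K)$, (ii) matched endpoint pairing induced by the bijection supplied with the Gauss data, and (iii) only virtual crossings as singularities. The claim therefore reduces to the purely planar assertion that any two such immersions are related by ambient isotopy of $\mathbb{R}^2$ supported in $W$ together with V1--V4 and T1 moves. I would process the components of $K'|_W$ one at a time: for each arc/loop, use V4 ``detour'' arguments in the style of Kauffman to pull it off the arcs that have already been matched to $K|_W$ without interfering with the fixed neighborhoods $N(v)$, then use isotopy in $W$ together with V1--V3 to deform it onto the corresponding arc of $K|_W$. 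The T1 move is invoked to swap which side of a bar neighborhood $N(\gamma)$ a virtually crossed strand traverses, which may be necessary when the two diagrams approach $\partial N(\gamma)$ from geometrically different sides even though the combinatorial endpoints on $\partial N(\gamma)$ coincide.

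For (2), assume $K$ and $K'$ have the same Gauss data via the bijection $g$. Since $V_R(K') \cup B(K')$ is a finite point set in $\mathbb{R}^2$, there is an ambient isotopy of $\mathbb{R}^2$ carrying each $g(v)$ onto $v$ and a chosen small regular neighborhood of $g(v)$ onto $N(v)$, and preserving the local model (positive/negative/virtual crossing, or bar) because $g$ preserves signs and matches the labeled boundary points $c^{(j)}$ and $\gamma^{(j)}$. A further small isotopy makes the image of $K'$ coincide with $K$ inside every $N(v)$, and any remaining real crossings or bars of the transported $K'$ that happen to lie in the interior of $W$ can be absorbed into their prescribed neighborhoods by additional isotopy. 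At that point $K'$ has been obtained from $K$ by replacing $K|_W$, and part (1) yields the equivalence $K \sim K'$.

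The main obstacle I anticipate is the detour step inside (1): one has to justify rigorously that two generic planar immersions of a 1-manifold in $W$ with matching boundary data and matching combinatorial component structure are related by V1--V4 (plus isotopy supported in $W$), and that T1 suffices to reconcile the two possible sides of each bar neighborhood from which a strand may approach $\partial N(\gamma)$. The analogous statement for virtual link diagrams without bars is folklore (cf.\ \cite{2,sk}), so my strategy is to cite that result and then extend it to the twisted setting by a local argument at each $N(\gamma)$ using T1.
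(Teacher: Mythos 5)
Your proposal matches the paper's proof in both structure and substance: part (2) is reduced to part (1) by an ambient isotopy that superposes the crossing/bar neighborhoods, and part (1) is proved by deforming the arcs/loops of $K'|_W$ onto those of $K|_W$ one at a time, realizing the deformation by isotopy supported in $W$ together with V1--V4 and with T1 handling passage across bar neighborhoods. The paper formalizes the ``detour'' step you flag as the main obstacle by taking a homotopy of each arc rel boundary generic with respect to the other arcs and the disks $N_1,\dots,N_m$ and decomposing it into local moves, which is exactly the rigorous version of your Kauffman-style argument.
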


\begin{proof}
(1) Let $N_1, N_2, \ldots, N_m $ be regular neighborhoods of the real crossings and bars of $K$. 
Let $a_1, a_2, \ldots, a_n $ and $a'_1, a'_2, \ldots, a'_n $ be the arcs/loops of $K|_W$ and $K'|_W$ respectively.  
Using an isotopy of $\mathbb{R}^2$ with support $W$, we may assume that the intersection of $a'_1$ with $a_2, \ldots, a_n $ are transverse double points.
The arc/loop $a_1$ is homotopic to $a'_1$ in $\mathbb{R}^2$ (relative to the boundary when $a_1$ is an arc).
Taking the homotopy generically with respect to the arcs/loops $ a_2, \ldots, a_n $, and the 2-disks $N_1, N_2, \ldots, N_m $, we see that the arc/loop $a_1$ can be changed into $a'_1$ by a finite sequence of moves as shown in Figure~\ref{mv} up to isotopy of $\mathbb{R}^2$  with support $W$.
Considering that all crossings in Figure~\ref{mv} are virtual crossings, we regard these moves as V1, V2, V3, V4, and T1 moves.  
In this way, we can change $a_1$ into $a_1'$ without changing other arcs/loops of $K|_W$ and $K'|_W$.  
Applying this argument inductively, all arcs/loops of $K|_W$ change into the corresponding ones of $K'|_W$. 

\begin{figure}[h]
  \centering
    \includegraphics[width=12cm,height=2cm]{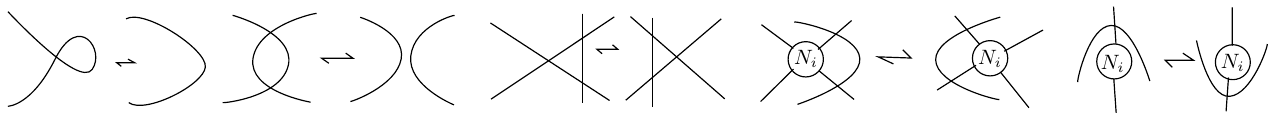}
        \caption{Moves on immersed curves.}
        \label{mv}
        \end{figure}
        
(2) Moving $K$ by an isotopy of $\mathbb{R}^2$, we may assume that $K'$ is obtained from $K$ by replacing $K|_W$.
By (1), we obtain the assertion. 
\end{proof}


\subsection{Braiding process}

Let $O$ be the origin of $\mathbb{R}^2$ and 
identify $\mathbb{R}^2 \setminus \{O\}$ with $\mathbb{R}_+\times S^1$ by polar coordinates, where $\mathbb{R}_+$ is the set of positive numbers. Let $\pi:\mathbb{R}^2 \setminus \{O\}=\mathbb{R}_+\times S^1 \to S^1$ denote the radial projection.  

For a twisted link diagram $K$, we denote by $V_R(K)$ the set of real crossings, by $V_B(K)$ the set of points on $K$ where bars intersect with, and by $X(K)$ the set of all (real or virtual) crossings and the set of points on $K$ where bars intersect with.    

\begin{definition}
A {\it closed twisted virtual braid diagram} is a twisted link diagram $K$ 
satisfying the following conditions (1) and (2):
\begin{itemize}
    \item[(1)] $K$ is contained in $\mathbb{R}^2 \setminus \{O\}$.
    \item[(2)] Let $k: \sqcup S^1 \to \mathbb{R}^2 \setminus \{O\}$ be the underlying immersion of $K$, where 
    $\sqcup S^1$ is a disjoint union of copies of $S^1$. 
    Then $\pi \circ k: \sqcup S^1 \to S^1$ is a covering map of $S^1$ of degree $n$ which respects the orientations of $\sqcup S^1$ and $S^1$.  
\end{itemize}
A closed twisted virtual braid diagram is {\it good} if it satisfies the following condition. 
\begin{itemize}
    \item[(3)] Let $N_1, N_2, \ldots, N_m $ be regular neighborhoods of the real crossings and bars of $K$. Then $\pi(N_i) \cap \pi(N_j) = \emptyset$ for $i \neq j$.  
\end{itemize}
\end{definition}

\begin{prop}\label{prop:AlexanderA}
Every twisted link diagram $K$ is equivalent, as a twisted link, to a good closed twisted virtual braid diagram $K'$ such that $K$ and $K'$ have the same Gauss data.  
\end{prop}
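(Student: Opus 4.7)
The plan is to adapt Alexander's braiding algorithm, in the virtual-braid form due to Kamada and to Kauffman--Lambropoulou, to the twisted setting. The key observation is that a bar is a purely local feature of the diagram, so placing it radially near the origin is no harder than placing a real crossing radially; thus the only essentially new ingredient beyond the virtual case is bookkeeping for bars.

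First I would isotope $K$ in $\mathbb{R}^2$, without changing its Gauss data, so that the origin $O$ lies off $K$ and every $v \in V_R(K) \cup B(K)$ admits a small disk neighborhood $N(v)$ with the following properties: the $N(v)$ are pairwise disjoint; inside each $N(v)$ the relevant strands of $K$ are short radial segments directed outward from $O$, consistent with the orientation of $K$ at $v$; and the angular images $\pi(N(v))$ are pairwise disjoint arcs in $S^1$. The radial-positioning requirement may force a small rotation of the neighborhood of each $v$, but this is an isotopy of $\mathbb{R}^2$ that alters nothing in the Gauss data and immediately secures condition (3) of goodness.

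Next I would replace $K|_W$, a finite collection of oriented arcs and loops in $W = \mathrm{Cl}(\mathbb{R}^2 \setminus \bigcup N(v))$ with prescribed endpoints in $V_R^{\partial}(K) \cup B^{\partial}(K)$, by new arcs and loops whose angular coordinate is strictly increasing (say counterclockwise). This is Alexander's trick: parameterize each arc by arclength and detect the segments along which $\theta \circ k$ decreases. Each such bad segment is cut out and replaced by an arc that goes the long way around $O$ counterclockwise, meeting every other strand it encounters transversely; each transverse intersection is declared a virtual crossing, and the replacement is routed entirely outside $\bigcup N(v)$ so that no new real crossings or bars appear. After finitely many replacements the arcs $a'_1,\ldots,a'_n$ all have monotone angular coordinate, so the resulting diagram $K'$ satisfies conditions (1) and (2) of a closed twisted virtual braid diagram, and (3) is preserved from the preparatory step.

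By construction $K'$ has the same $V_R$, $S$, $B$, and $\mu$ as $K$, and the same $G$ because only the interior of each arc in $W$ was changed, not its pair of endpoints in $V_R^{\partial}(K) \cup B^{\partial}(K)$. Thus $K$ and $K'$ have identical Gauss data, and Lemma~\ref{lemma:Same Gauss Data}(2) yields the required equivalence as twisted links. The main technical obstacle will be the inductive bookkeeping in the braiding step: one must ensure that the replacement arc for each bad segment can be routed so that it meets other strands only transversely, passes through no $N(v)$, and does not cause a previously corrected segment to become bad again. This is handled by processing bad segments in a chosen order and routing each replacement through an angular range lying outside the angular ranges already used, which is possible because there is always room in $W$ to make a further counterclockwise loop around $O$.
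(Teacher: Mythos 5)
Your proposal follows essentially the same route as the paper's proof: isotope $K$ so that the neighborhoods of the real crossings and bars miss $O$, have disjoint angular images and look locally braided, then replace $K|_W$ by angularly monotone arcs carrying only virtual crossings, and finally invoke Lemma~\ref{lemma:Same Gauss Data}(2) via equality of Gauss data. Two small remarks: inside each $N(v)$ the strands should be short segments along which the \emph{angular} coordinate increases (i.e.\ transverse to the rays from $O$), not ``radial segments directed outward from $O$,'' since the latter would violate condition (2) of a closed twisted virtual braid diagram; and the ``inductive bookkeeping'' you flag as the main technical obstacle is not actually needed, because once any $K'$ with the same Gauss data is produced, Lemma~\ref{lemma:Same Gauss Data} alone supplies the equivalence.
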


\begin{proof}
Let $K$ be a twisted link diagram and let $N_1, N_2, \ldots, N_m $ be regular neighborhoods of the real crossings and bars of $K$.
Moving $K$ by an isotopy of $\mathbb{R}^2$, we may assume that all $N_i$ are in $\mathbb{R}^2 \setminus \{O\}$,  $\pi(N_i) \cap \pi(N_j) =\emptyset$ for $i\neq j$ and the restriction of $K$ to $N_i$ satisfies the condition of a closed twisted virtual braid diagram.
Replace the remainder $K|_{W(K)}$ such that the result is a good closed twisted virtual braid diagram $K'$.
Then $K$ and $K'$ have the same Gauss data, and by Lemma~\ref{lemma:Same Gauss Data} they are equivalent as twisted links.  
\end{proof}

The procedure in the proof of Proposition~\ref{prop:AlexanderA} makes a given twisted link diagram to a good closed twisted virtual braid diagram having the same Gauss data with $K$. This is the {\it braiding process} in our paper. 

A point $\theta$ of $S^1$ is called a {\it regular value} for a closed twisted virtual braid diagram $K$ if $X(K)\cap\pi^{-1}(\theta)=\emptyset$.
Cutting $K$ along the half line $\pi^{-1}(\theta)$ for a regular value of $\theta$, we obtain a twisted virtual braid diagram whose closure is equivalent to $K$.  

Thus, Proposition~\ref{prop:AlexanderA} implies the following.  

\begin{theorem}\label{prop:AlexanderB}
Every twisted link is represented by the closure of a twisted virtual braid diagram.
\end{theorem}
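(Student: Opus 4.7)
The plan is to deduce Theorem~\ref{prop:AlexanderB} directly from Proposition~\ref{prop:AlexanderA} together with the cutting procedure sketched in the paragraph preceding the theorem. Given a twisted link $L$, choose any twisted link diagram $K$ representing $L$. I would first invoke Proposition~\ref{prop:AlexanderA} to replace $K$ by an equivalent good closed twisted virtual braid diagram $K'$ (equivalence meaning that they represent the same twisted link). Thus it suffices to realise $K'$ as the closure of some twisted virtual braid diagram.

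Next I would produce a regular value $\theta\in S^1$ for $K'$. Since $K'$ is good, the set $X(K')$ of real crossings, virtual crossings, and bar-points is finite and each real crossing and bar has a regular neighbourhood $N_i$ with pairwise disjoint images $\pi(N_i)$; virtual crossings project to finitely many points of $S^1$ as well. Hence the set $S^1\setminus\pi(X(K'))$ is a nonempty open subset of $S^1$, from which any $\theta$ will serve as a regular value, i.e.\ $\pi^{-1}(\theta)\cap X(K')=\emptyset$.

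Now I would cut $K'$ along the half-line $\pi^{-1}(\theta)$ and identify the resulting annulus $\pi^{-1}(S^1\setminus\{\theta\})$ with a rectangle $E=[0,n+1]\times[0,1]$ in such a way that the two boundary half-lines become the horizontal edges $[0,n+1]\times\{0\}$ and $[0,n+1]\times\{1\}$, the radial direction is sent to the vertical direction, and the $n$ preimages under $\pi\circ k$ of $\theta$ on $K'$ (which exist by the degree-$n$ covering condition (2) in the definition of a closed twisted virtual braid diagram) map to the $n$ points $(i,0)$ and $(i,1)$ for $i=1,\dots,n$. Because $\pi\circ k$ is an orientation-preserving covering of $S^1$, the image of $K'$ in $E$ is monotonic with respect to the second coordinate, and its crossings and bars are inherited from those of $K'$ together with their decorations; hence the image is a twisted virtual braid diagram $b$ in the sense of the second definition in Section~\ref{sect:braid}. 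Finally, by construction, the closure of $b$, obtained by re-glueing $[0,n+1]\times\{0\}$ to $[0,n+1]\times\{1\}$, reproduces $K'$ on the nose, so the closure of $b$ is the twisted link $L$.

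I do not anticipate a serious obstacle, since all the real content has been absorbed into Proposition~\ref{prop:AlexanderA}; the remaining task is the routine verification that cutting at a regular value yields an object satisfying conditions (1)--(5) of the second definition of a twisted virtual braid diagram. The only mildly delicate point is ensuring that the chosen $\theta$ misses \emph{all} of $X(K')$ simultaneously (so that no crossing or bar lies on the cut), which is precisely why the goodness hypothesis from Proposition~\ref{prop:AlexanderA} is used.
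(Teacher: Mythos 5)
Your proposal is correct and follows essentially the same route as the paper: the paper likewise deduces the theorem immediately from Proposition~\ref{prop:AlexanderA} by choosing a regular value $\theta$ and cutting the good closed twisted virtual braid diagram along the half-line $\pi^{-1}(\theta)$. Your write-up simply spells out in more detail the routine verification that the cut object satisfies the definition of a twisted virtual braid diagram, which the paper leaves implicit.
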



\section{The Markov theorem for twisted links}
\label{sect:Markov}

In this section we show a theorem on braid presentation of twisted links which is 
analogous to the Markov theorem for classical links.

A {\it twisted Markov move of type $0$} or a {\it TM0-move} is a replacement of a twisted virtual braid diagram $b$ with another $b'$ of the same degree such that $b$ and $b'$ are equivalent as twisted virtual braids, i.e., they represent the same element of the twisted virtual braid group.   

A {\it twisted Markov move of type $1$} or a {\it TM1-move} is a replacement of a twisted virtual braid (or its diagram) $b$ with $b_1 b b_1^{-1}$ where  $b_1$ is a twisted virtual braid (or its diagram) of the same degree with $b$. We also call this move a {\it conjugation}.  

A {\it twisted Markov move of type $1$} or a {\it TM1-move} may be defined as a replacement of a twisted virtual braid (or its diagram) $b = b_1 b_2$ with $b' = b_2 b_1$ where  $b_1$ and $b_2$ are twisted virtual braids (or their diagrams) of the same degree.
See Figure~\ref{tm1}.
\begin{figure}[h]
  \centering
    \includegraphics[width=8cm,height=3cm]{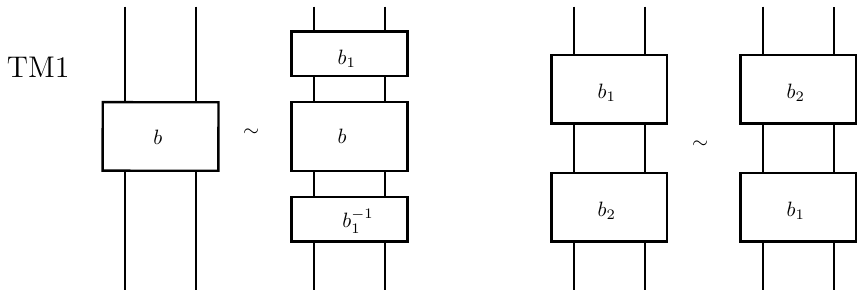}
        \caption{A twisted Markov move of type 1 or a TM1-move.}
        \label{tm1}
\end{figure}

For a twisted virtual braid (or its diagram) $b$ of degree $n$ and non-negative integers $s$ and $t$, we denote by $\iota_s^t(b)$ the twisted virtual braid (or its diagram) of degree $n + s + t$ obtained from $b$ by adding $s$ trivial strands to the left and $t$ trivial strands to the right.
This defines a monomorphism $\iota_s^t: TVB_n \to TVB_{n+s+t}$.  

A {\it stabilization of positive, negative or virtual type} is a replacement of a twisted virtual braid (or its diagram) $b$ of degree $n$ with $\iota_0^1(b)\sigma_n$, $\iota_0^1(b)\sigma_n^{-1}$ or $\iota_0^1(b)v_n$, respectively.
\begin{figure}[h]
  \centering
    \includegraphics[width=10cm,height=2cm]{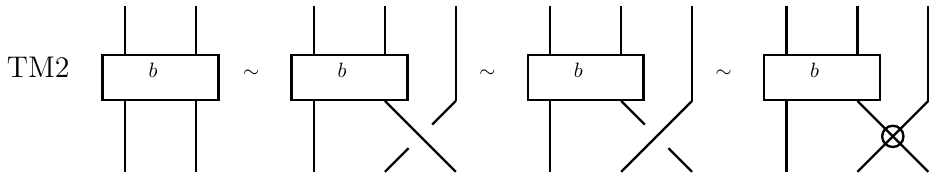}
        \caption{A twisted Markov move of type 2 or a TM2-move.}
        \label{tm2}
\end{figure}

A {\it twisted Markov move of type $2$} or a {\it TM2-move} is a  stabilization of positive, negative or virtual type, or its inverse operation.  
See Figure~\ref{tm2}.

A {\it right virtual exchange move} is a replacement 
$$ \iota_0^1(b_1) \sigma_n^{-1} \iota_0^1(b_2) \sigma_n \longleftrightarrow 
\iota_0^1(b_1) v_n \iota_0^1(b_2) v_n, $$ 
 and a {\it left virtual exchange move} is a replacement 
$$ \iota_1^0(b_1) \sigma_1^{-1} \iota_1^0(b_2) \sigma_1 \longleftrightarrow   
\iota_1^0(b_1) v_1 \iota_1^0(b_2) v_1, $$ 
where $b_1$ and $b_2$ are twisted virtual braids (or their diagrams).  
A {\it twisted Markov move of type $3$} or a {\it TM3-move} is a right/left virtual exchange move or its inverse operation. 
See Figure~\ref{tm3}.
\begin{figure}[h]
  \centering
    \includegraphics[width=10cm,height=3.5cm]{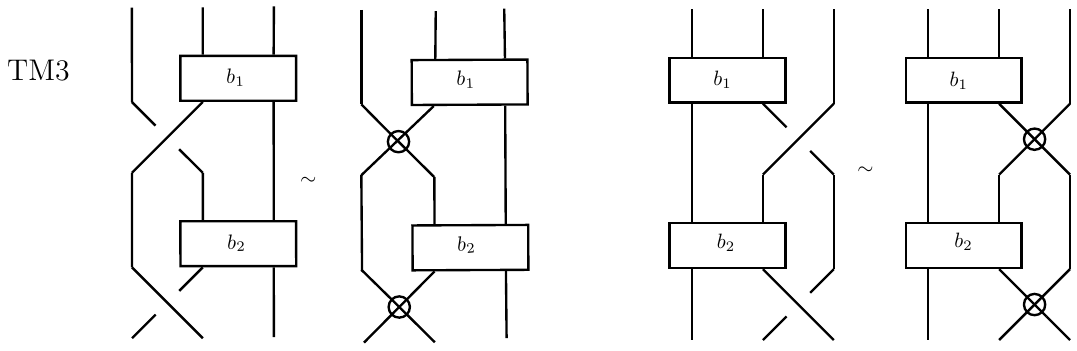}
        \caption{A twisted Markov move of type 3 or a TM3-move.}
        \label{tm3}
\end{figure}

\begin{definition}
Two twisted virtual braids (or their diagrams) are {\it Markov equivalent} if they are related by a finite sequence of twisted Markov moves TM1--TM3 (or TM0--TM3 when we discuss them as diagrams). 
\end{definition}

\begin{theorem}\label{theorem:MarkovA}
Two twisted virtual braids (or their diagrams) have equivalent closures as twisted links if and only if they are Markov equivalent.
\end{theorem}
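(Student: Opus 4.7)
The plan is to prove both implications separately. The \emph{only if} direction is the routine verification that taking closures is invariant under each twisted Markov move. Closures of conjugate braids $b$ and $b_1 b b_1^{-1}$ coincide up to planar isotopy by sliding $b_1$ around the closure, so TM1 is harmless; TM0 is invariant by definition. Closures related by a TM2 stabilization (positive, negative, or virtual) differ from each other by a single R1 or V1 move performed near the newly added strand. For a right virtual exchange move (TM3), the two closed diagrams look identical except near the closure arc on the rightmost strand, where one shows a classical kink and the other a pair of virtual crossings; a short sequence of V1, V2, and R2 moves on the closure equates them, and similarly on the left.

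For the \emph{if} direction I would adapt the arguments of Kamada and Kauffman--Lambropoulou for virtual braids, enriched to track bars. The strategy is: fix the braiding process $\beta$ of Section~\ref{sect:Alexander}, which takes a twisted link diagram $K$ to a twisted virtual braid diagram $\beta(K)$ whose closure is equivalent to $K$, and show (a) that $\beta$ is well defined up to Markov equivalence on the braid side, and (b) that a single extended Reidemeister move applied to $K$ changes $\beta(K)$ only by a finite sequence of TM0--TM3 moves. A separate lemma then shows that if $b$ is already a twisted virtual braid diagram, then $\beta(\widehat{b})$ is Markov equivalent to $b$, which concludes the proof since two braids with equivalent closures satisfy $b \sim_{\mathrm{M}} \beta(\widehat{b}) \sim_{\mathrm{M}} \beta(\widehat{b'}) \sim_{\mathrm{M}} b'$, where $\sim_{\mathrm{M}}$ denotes Markov equivalence.

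To carry out (a), I would show that any two choices in the braiding process (origin $O$, regular value $\theta$ at which to cut the closed braid, and the replacement of $K|_W$) lead to Markov equivalent braids. Independence of $\theta$ is conjugation, hence TM1. Independence of the replacement of $K|_W$ uses Lemma~\ref{lemma:Same Gauss Data} together with the observation that V1, V2, V3, V4, and T1 moves on the closed diagram can be realized by TM0 and TM2 moves on the cut braid. For (b), I would handle the extended Reidemeister moves one by one: R2, R3, V2, V3, V4, T1, and T2 occur in a small disk and translate directly into defining relations of $TVB_n$ (TM0) via Theorem~\ref{thm:StandardPresentation}; R1, V1, and T3 can be arranged so that their local picture lies near the closure arc and yield TM2 stabilizations, or the relation (\ref{rel-twist-III}), possibly combined with a virtual exchange (TM3).

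The main obstacle is the case where a Reidemeister move, after braiding, forces a strand to cross over or under the closure. Here the original virtual Alexander--Markov theorem already requires the virtual exchange moves, and in the twisted setting one must additionally check that bars can be transported across these exchanges without generating moves beyond TM0--TM3. Concretely, the hard case is an R1, V1, or T3 move whose local picture straddles the cut line $\pi^{-1}(\theta)$: one has to verify that, after rebraiding, the two resulting diagrams differ by a right or left virtual exchange move, using relations (\ref{rel-bv}), (\ref{rel-vb}), (\ref{rel-twist-III}) and (\ref{rel-twist-III-negative}) to absorb any residual bar generators. Once this case analysis is complete the theorem follows.
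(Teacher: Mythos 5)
Your proposal follows essentially the same route as the paper: reduce to closed braid diagrams via the braiding process of Section~\ref{sect:Alexander}, establish well-definedness of braiding through the fact that diagrams with the same Gauss data are Markov equivalent (realizing V-moves and T1 on the closure by TM0- and TM2-moves), and then run a case analysis over the extended Reidemeister moves, citing the classical and virtual cases from \cite{sk} and \cite{kl} and treating T1, T2, T3 separately. The paper packages your step (a) as Lemma~\ref{lem:unique} and, for T2 and T3 (whose Gauss data changes), first isotopes the disk of the move so that its strands are monotone before braiding the complement so that the move becomes a single TM0-move --- the one detail your sketch elides --- but otherwise the architectures coincide.
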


\begin{remark}
In Section~\ref{sect:exchange}, 
it turns out that if two twisted virtual braids (or their diagrams) is related by a left virtual exchange move then they are related by a sequence of TM1-moves (or TM0-moves and TM1-moves when we discuss them as diagrams) and a right virtual exchange move. Thus we may remove left virtual exchange moves from the definition of Markov equivalence. 
\end{remark}

Let $K$ and $K'$ be closed twisted virtual braid diagrams and let $b$ and $b'$ be twisted virtual braid diagrams obtained from $K$ and $K'$ by cutting along $\pi^{-1}(\theta)$ and $\pi^{-1}(\theta')$ for some regular values $\theta$ and $\theta'$.    
We say that $K'$ is obtained from $K$ by a {\it twisted Markov move of type $0$} or a {\it TM0-move} if they are equivalent as closed twisted virtual braids.
Note that $K'$ is obtained from $K$ by a TM0-move if and only if $b$ and $b'$ are related by a finite sequence of TM0-moves and TM1-moves.  
We say that $K'$ is obtained from $K$ by a {\it twisted Markov move of type $2$} or a {\it TM2-move} if $b$ and $b'$ are related by a TM2-move and some   TM1-moves.
We say that $K'$ is obtained from $K$ by a {\it twisted Markov move of type $3$} or a {\it TM3-move} if $b$ and $b'$ are related by a TM3-move and some   TM1-moves.

\begin{definition}
Two closed twisted virtual braid diagrams $K$ and $K'$ are {\it Markov equivalent} if they are related by a finite sequence of TM0-, TM2- and TM3-moves.  
\end{definition}

\begin{prop}\label{prop:closure}
Two closed twisted virtual braid diagrams $K$ and $K'$ are Markov equivalent if and only if twisted virtual braid diagrams $b$ and $b'$ are Markov equivalent, where $b$ and $b'$ are obtained from $K$ and $K'$ by cutting along $\pi^{-1}(\theta)$ and $\pi^{-1}(\theta')$ for some regular values $\theta$ and $\theta'$.  
\end{prop}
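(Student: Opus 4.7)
The plan is to prove both directions by chaining through moves and unwinding definitions, relying on two key observations that are essentially built into the setup: (i) if $K$ is a closed twisted virtual braid diagram and $b_\theta$, $b_{\theta^*}$ are obtained from $K$ by cutting along $\pi^{-1}(\theta)$ and $\pi^{-1}(\theta^*)$ for two regular values, then $b_\theta$ and $b_{\theta^*}$ are related by a finite sequence of TM0- and TM1-moves; and (ii) if $b$ is obtained from $K$ by cutting at a regular value, then the closure of $b$ equals $K$ up to isotopy of the annulus. Observation (i) follows because rotating the cut continuously from $\theta$ to $\theta^*$ leaves the braid word unchanged except when the cut ray sweeps past a point of $X(K)$, which performs a cyclic permutation $b_1b_2 \mapsto b_2 b_1$ (a TM1-move); alternatively, it is an immediate consequence of the parenthetical ``note'' following the definition of a TM0-move between closed braids, applied to the trivial equivalence $K = K$. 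Observation (ii) is immediate from the definition of closure.

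For the forward direction, suppose $K$ and $K'$ are Markov equivalent as closed braids, connected by a chain $K = K_0, K_1, \ldots, K_m = K'$ of TM0-, TM2-, TM3-moves. Choose a regular value $\theta_i$ for each $K_i$ (with $\theta_0 = \theta$ and $\theta_m = \theta'$), and let $b_i$ be the braid diagram obtained by cutting $K_i$ along $\pi^{-1}(\theta_i)$. By the definitions of TM0-, TM2-, and TM3-moves between closed braids, for each $i$ there exist regular values of $K_{i-1}$ and $K_i$ such that the corresponding cut braids are related by a single TM0-/TM2-/TM3-move together with finitely many TM1-moves. Using (i) to pass between different cuts of the same $K_i$ at the cost of inserting additional TM0- and TM1-moves, we obtain a sequence from $b_{i-1}$ to $b_i$ consisting of braid Markov moves. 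Concatenating over all $i$ shows $b$ and $b'$ are Markov equivalent.

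For the backward direction, suppose $b$ and $b'$ are related by a chain $b = b_0, b_1, \ldots, b_m = b'$ of TM0-, TM1-, TM2-, and TM3-moves. Let $K_i$ denote the closure of $b_i$. For each $i$, the move between $b_{i-1}$ and $b_i$ induces a move between the closed diagrams $K_{i-1}$ and $K_i$: a TM0- or TM1-move between braids yields a TM0-move between closures (the TM1 case because conjugation corresponds to a rotation of the closed diagram), while a TM2-move (respectively TM3-move) between braids yields a TM2-move (respectively TM3-move) between closures directly from the definitions in terms of cuts. Hence $K_0$ and $K_m$ are Markov equivalent as closed braid diagrams. By (ii), $K_0$ coincides with $K$ and $K_m$ with $K'$ up to isotopy, which is a TM0-move, so $K$ and $K'$ are Markov equivalent.

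The only step with any subtlety is observation (i), which reconciles the choices of regular values appearing in the definitions of moves between closed braids; once this rotation/cyclic-shift fact is in hand, both directions reduce to routine concatenation of moves and no further geometric input is needed.
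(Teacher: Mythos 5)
Your proof is correct and follows essentially the same route as the paper: the paper's entire argument is the remark that $b$ is determined by $K$ up to TM1-moves (your observation (i)), after which it declares the assertion ``trivial by definition'' --- which is exactly the definition-unwinding you carry out explicitly in your two directions. You have simply written out the routine chaining that the paper leaves implicit.
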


\begin{proof}
For a given closed twisted virtual braid diagram $K$, $b$ is uniquely determined up to TM1-moves. Then the assertion is trivial by definition. 
\end{proof} 

By Proposition~\ref{prop:closure}, 
Theorem~\ref{theorem:MarkovA} is equivalent to the following theorem.  

\begin{theorem}\label{theorem:MarkovB}
Two closed twisted virtual braid diagrams are equivalent as twisted links if and only if they are Markov equivalent.
\end{theorem}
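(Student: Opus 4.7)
The plan is to prove both implications in the reformulation with closed twisted virtual braid diagrams. The \emph{if} direction requires checking that each TM move preserves the twisted link type of the closure. TM0-moves preserve it by definition, since they are given by the equivalence relation on diagrams. TM2-moves of positive/negative type realize R1-moves at the closure, and TM2 of virtual type realizes a V1-move. For TM3, the right virtual exchange move can be depicted in an annular neighborhood of the braid axis and converted between its two forms using V-moves and T2-moves applied along the closing arcs; the left virtual exchange move is handled symmetrically.

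For the \emph{only if} direction, suppose $K$ and $K'$ are equivalent as twisted links. The key auxiliary statement that I would establish first is a braid-level strengthening of Lemma~\ref{lemma:Same Gauss Data}: if two good closed twisted virtual braid diagrams have the same Gauss data, then they are Markov equivalent. Proceeding arc by arc as in the proof of Lemma~\ref{lemma:Same Gauss Data}, each local modification of Figure~\ref{mv} is analyzed with respect to the radial projection $\pi$. Moves that stay in a region of constant covering degree correspond to TM0-moves coming from the V/T relations of $TVB_n$. A move that increases or decreases the winding of an arc around $O$ by $\pm 1$ via a virtual crossing introduces or cancels a factor $v_i$ and is realized by a virtual TM2-move together with TM1-conjugations to bring the factor to the rightmost strand, and the analogous argument with a bar uses T1 and commutation. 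A move that slides an arc past the braid axis between two other strands that separately cross it with opposite orientations is exactly a virtual exchange move, and hence a TM3-move.

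With this lemma in hand, the sequence of extended Reidemeister moves and planar isotopies relating $K$ and $K'$ is treated step by step. Apply the braiding process of Proposition~\ref{prop:AlexanderA} to each intermediate diagram in the sequence, yielding closed twisted virtual braid diagrams $K = K_0, K_1, \ldots, K_m = K'$ such that consecutive diagrams differ either by a planar isotopy (which preserves the Gauss data) or by a single extended Reidemeister move supported in a small disk. Isotopy steps are absorbed by the same-Gauss-data lemma. For a local Reidemeister move supported in a disk not meeting the braid axis, one may first use the same-Gauss-data lemma to arrange that the disk lies in a region where the diagrams are already in braid position, so that the two braids differ by the corresponding braid relation (for R2, R3, V2, V3, V4, T2, T3 this is a TM0-move from Theorem~\ref{thm:StandardPresentation}) or by a TM2-move (for R1 and V1) or by a TM0-move (for T1 via relation (\ref{rel-bv})).

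The main obstacle, as in the classical and virtual cases, is the same-Gauss-data lemma; once it is in place, the rest of the argument is a clean case analysis of local moves. The subtlety is that an arc of $K|_W$ can be deformed in $\mathbb{R}^2 \setminus \{O\}$ through configurations with very different relative positions with respect to the axis, and controlling the change in the associated braid word forces the introduction of the virtual exchange move. A careful bookkeeping of how arcs of $K|_W$ wind about $O$ relative to the other strands, together with decomposition of a homotopy of $a_1$ to $a_1'$ into elementary moves transverse to $\pi$, will show that every such change is a composition of TM1-, TM2- and TM3-moves; this is the technical heart of the theorem.
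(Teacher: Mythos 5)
Your overall architecture is the same as the paper's: braid every intermediate diagram of the sequence of extended Reidemeister moves via the braiding process, reduce everything to a ``same Gauss data implies Markov equivalent'' lemma at the closed-braid level (the paper's Lemma~\ref{lem:unique}), and then do a case analysis of the local moves. One genuine difference in the key lemma: the paper does \emph{not} need TM3-moves there. After equalizing the intersection numbers $|a_i\cap\pi^{-1}(\theta)|=|a_i'\cap\pi^{-1}(\theta)|$ by virtual TM2-moves, it chooses the homotopy from $a_i$ to $a_i'$ so that $\pi\circ k_i^t$ stays an orientation-preserving immersion throughout, i.e.\ the arc never leaves braided position, so every generic event is a V- or T1-type event and hence a TM0-move. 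Your version, which lets arcs pass the axis and absorbs this with TM3-moves, is not wrong for proving Markov equivalence, but it is weaker and misplaces where the exchange move actually enters the proof (it enters through the treatment of R1/V1 and the detour analysis inherited from \cite{sk}, not through the Gauss-data lemma).

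There is one concrete gap you should repair: your claim that a T3 move, once placed ``in a region where the diagrams are already in braid position,'' is a TM0-move coming from relation (\ref{rel-twist-III}) is only true for two of the four oriented versions of T3. The braid relation $v_i\sigma_i v_i=\gamma_{i+1}\gamma_i\sigma_i\gamma_i\gamma_{i+1}$ (and its negative counterpart) encodes only the coherently downward-oriented tangles T3a and T3b of Figure~\ref{ot3m}; for the orientations T3c and T3d the local tangle cannot be isotoped into braid position at all, since one strand runs against the angular direction. The paper spends a separate argument reducing T3c and T3d to T3b and T3a by a $90^\circ$ rotation followed by V1--V4 moves (justified by a Gauss-data comparison inside the disk), and without some such reduction your case analysis does not cover these two orientations. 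A similar caution applies to your one-line treatment of R1 and V1: a kink is never in braid position, so ``arrange that the disk lies in a region already in braid position'' cannot be done there, and this is exactly the part the paper delegates to the argument of \cite{sk}.
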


To prove Theorem~\ref{theorem:MarkovB}, we require the following lemma.

\begin{lemma}\label{lem:unique}
Two closed twisted virtual braid diagrams with the same Gauss data are Markov equivalent.
\end{lemma}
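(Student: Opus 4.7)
The plan is to leverage Lemma \ref{lemma:Same Gauss Data}(1) in order to reduce the problem to tracking a finite sequence of isotopies with support in $W$ and elementary moves V1--V4, T1 applied one arc at a time to $K|_W$. Since such a sequence modifies a single arc of $K|_W$ at each step, it suffices by induction to analyze one elementary step and show that the closed twisted virtual braid diagrams on either side are Markov equivalent.

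First, I would apply the braiding process of Proposition \ref{prop:AlexanderA} to each intermediate diagram $K_0 = K, K_1, \dots, K_m = K'$ produced by Lemma \ref{lemma:Same Gauss Data}(1), obtaining good closed twisted virtual braid diagrams $\tilde{K}_0, \ldots, \tilde{K}_m$ with $\tilde{K}_0 = K$ and $\tilde{K}_m = K'$ sharing their Gauss data with the $K_i$. The braiding process leaves the neighborhoods of real crossings and bars untouched, so each pair $K_i, \tilde{K}_i$ differs only by replacing $K_i|_W$, and the task is to exhibit a sequence of TM0-, TM2-, TM3-moves carrying $\tilde K_i$ to $\tilde K_{i+1}$.

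Next, I would examine each elementary move case by case. An isotopy in $W$ and a V2, V3, or T1 move that takes place locally in a braid-positioned region corresponds, after cutting along some regular value $\pi^{-1}(\theta)$, to one of the defining relations of Theorem \ref{thm:StandardPresentation}—namely (\ref{rel-inverse-v}), (\ref{rel-height-vv}), (\ref{rel-vvv}), (\ref{rel-height-sv}), (\ref{rel-vsv}), (\ref{rel-height-bv}), (\ref{rel-bv})—and is therefore a TM0-move on the braid. A V1 self-kink on a purely virtual arc is absorbed by $v_i^2 = e$, again TM0. A V4 move crossing a real strand is covered by (\ref{rel-vsv}) or (\ref{rel-height-sv}). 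If instead the local support of the move crosses the cut $\pi^{-1}(\theta)$, one first performs a cyclic shift of the cut position; this corresponds at the braid level to a TM1 conjugation and thus to a TM0-move at the closed diagram level, after which the previous case applies.

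The main obstacle is the case where the arc of $K_i|_W$ and the corresponding arc of $K_{i+1}|_W$, though sharing endpoints, wind around the origin $O$ a different number of times. No braid-monotonic isotopy connects arcs of different winding numbers, so relations alone cannot suffice. Here the resolution is to use a stabilization TM2 to introduce an auxiliary strand at the outside of the braid, carry it by TM1-conjugation to the vicinity of the arc being modified, and then apply a right virtual exchange move (TM3) to trade a classical winding $\sigma_n^{\pm 1}$ for a virtual winding $v_n$ (or vice versa); iterating adjusts the winding to the required amount, after which the inverse stabilization restores the strand count. The presence of bars poses no obstruction because, by relations (\ref{rel-height-bb})--(\ref{rel-height-sb}) and (\ref{rel-bv}), any $\gamma_j$ commutes with generators supported on the auxiliary strand and slides freely through the rerouting region. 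Combining these reductions yields the Markov equivalence of $\tilde K_i$ and $\tilde K_{i+1}$, and therefore of $K$ and $K'$.
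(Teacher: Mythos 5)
Your overall strategy---reduce to the move sequence of Lemma \ref{lemma:Same Gauss Data}(1) and braid every intermediate diagram---differs from the paper's, and it has two genuine gaps. First, Lemma \ref{lemma:Same Gauss Data}(1) only applies after $K'$ has been isotoped so that $K$ and $K'$ coincide on the neighborhoods $N_1,\dots,N_m$ of the real crossings and bars. For two \emph{closed braid} diagrams this is a real obstruction: the angular images $\pi(N_1),\dots,\pi(N_m)$ and $\pi(N'_1),\dots,\pi(N'_m)$ may occur in different cyclic orders on $S^1$, and no isotopy of $\mathbb{R}^2\setminus\{O\}$ preserving the closed-braid condition can reorder them. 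The paper devotes its Case~(II) (Figures \ref{c} and \ref{c2}) to interchanging two consecutive neighborhoods by explicit TM0- and TM2-moves; your proposal never confronts this. Relatedly, braiding each intermediate $K_i$ independently and asserting $\widetilde K_i\sim\widetilde K_{i+1}$ is circular: the output of the braiding process is only well defined up to the very equivalence the lemma asserts, so you must compare corresponding arcs directly (the paper does this via a homotopy through maps whose composition with $\pi$ is an orientation-preserving immersion, which is realized by TM0-moves).

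Second, your resolution of the winding-number mismatch is not right. A right virtual exchange move applies only to a braid of the special form $\iota_0^1(b_1)\sigma_n^{-1}\iota_0^1(b_2)\sigma_n$; a single stabilization puts exactly one crossing on the auxiliary strand, so the configuration you describe never arises, and in any case trading $\sigma_n^{\pm1}$ for $v_n$ on an auxiliary strand does not change the degree over $S^1$ of a chosen arc of $K|_W$. The correct (and simpler) tool, used in the paper, is a TM2-move of \emph{virtual type}: a virtual stabilization inserts a virtual kink encircling $O$ into the arc passing through the stabilization site, raising $|a_i\cap\pi^{-1}(\theta)|$ by one each time; TM3-moves are not needed anywhere in this lemma. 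Your treatment of the purely local moves (V1--V4, T1 in braid position realized as TM0-moves) is fine, but without the reordering argument and with the faulty winding-number step the proof does not go through.
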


\begin{proof}
Let $K$ and $K'$ be closed twisted virtual braids with the same Gauss data. Modifying them by isotopies of $\mathbb{R}^2 \setminus \{O\}$, we may assume that they are good.
Let $N_1, N_2, \ldots, N_m$ be regular neighborhoods of the real crossings and bars of $K$, and $N'_1, N'_2, \ldots, N'_m$ be regular neighborhoods of the corresponding real crossings and bars of $K'$. 

Case (I). Suppose that $\pi(N_1), \pi(N_2), \ldots, \pi(N_m)$ and $\pi(N'_1), \pi(N'_2), \ldots, \pi(N'_m)$ appear in $S^1$ in the same cyclic order.
Modifying $K$ by an isotopy of $\mathbb{R}^2 \setminus \{O\}$ keeping the condition of a good closed twisted virtual braid, 
we may assume that $N_1=N'_1, N_2=N'_2, \ldots, N_m=N'_m$ and the restrictions of $K$ and $K'$ to these disks are identical. 
Let $a_1, \dots, a_s$ be the arcs/loops of $K|_W$ and $a'_1, \dots, a'_s$ be the corresponding arcs/loops of $K'|_W$.  
Let $\theta \in S^1$ be a regular value for $K$ and $K'$ such that $\pi^{-1}(\theta)$ is disjoint from $N_1 \cup \dots \cup N_m$. 
If there exists an arc/loop $a_i$ of $K|_W$ such that $|a_i \cap \pi^{-1}(\theta)|\neq |a'_i \cap \pi^{-1}(\theta)|$, then move a small segment of $a_i$ or $a'_i$ toward the origin $O$ by some V2 moves which are 

TM0-moves and apply some VM2-moves of virtual type so that $|a_i \cap \pi^{-1}(\theta)|=|a'_i \cap \pi^{-1}(\theta)|$ after the modification.
Thus without loss of generality, we may assume that $|a_i \cap \pi^{-1}(\theta)|=|a'_i \cap \pi^{-1}(\theta)|$ for all $i=1, \dots, s$. 

Let $k : \sqcup S^1 \to \mathbb{R}^2 \setminus \{O\}$ and $k' : \sqcup S^1 \to \mathbb{R}^2 \setminus \{O\}$ be the underlying immersions of $K$ and $K'$, respectively, such that they are identical near the preimages of the real crossings and bars. 
Let $I_1, \ldots, I_s$ be arcs/loops in $\sqcup S^1$ with $k(I_i)=a_i$ and $k'(I_i)=a'_i$ for $i=1, \dots, s$.
Note that $\pi \circ k|_{I_i}$ and $\pi \circ k'|_{I_i}$ are orientation-preserving immersions into $S^1$ with $\pi \circ k|_{\partial I_i}=\pi \circ k'|_{\partial I_i}$.
Since $a_i$ and $a'_i$ have the same degree, so we have a homotopy $k_i^t: I_i \to \mathbb{R}^2 \setminus \{O\}$ $(t \in [0,1])$ of $I_i$ relative to the boundary $\partial I_i$ such that $k^0_i=k|_{I_i}$ and $k^1_i=k|_{I_i}$ and $\pi \circ k^t_i$ is an orientation-preserving immersion.
Taking such a homotopy generically with respect to the other arcs/loops of $K|_{W}$ and $K'|_{W}$ and the 2-disks $N_1, N_2, \ldots, N_m$, we see that $a_i$ can be transformed to $a'_i$ by a sequence of TM0-moves. 
Apply this procedure inductively, we can change  $a_1, \dots, a_s$ to $a'_1, \dots, a'_s$ by a sequence of TM0-moves and TM2-moves.  
Thus we see that $K$ is transformed into $K'$ by a finite sequence of TM0 and TM2-moves.

Case (II). Suppose that $\pi(N_1), \pi(N_2), \dots, \pi(N_m)$ and $\pi(N'_1), \pi(N'_2), \dots, \pi(N'_m)$ do not appear in $S^1$ in the same cyclic order.
It is sufficient to show that we can interchange the position of two consecutive $\pi(N_i)$'s. 
Suppose that we want to interchange $\pi(N_1)$ and $\pi(N_2)$. 

  (1) Suppose that $N_2$ is a neighborhood of a real crossing.  Figure~\ref{c} shows how to interchange $\pi(N_1)$ and $\pi(N_2)$ by TM0-moves and TM2-moves.  
  \begin{figure}[h]
  \centering
    \includegraphics[width=10cm,height=7cm]{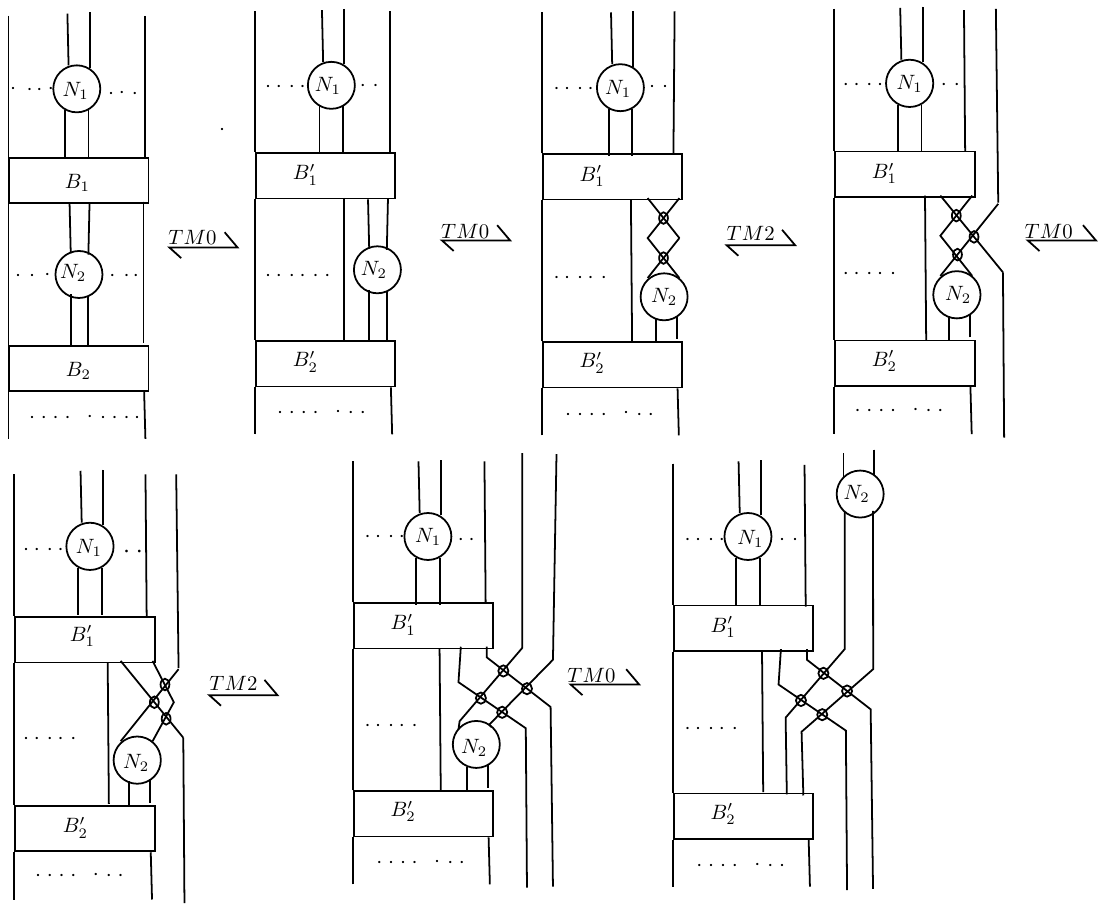}
        \caption{Interchange the positions of $N_1$ and $N_2$.}
        \label{c}
\end{figure}

  (2) Suppose that $N_2$ is a neighborhood of a bar.  Figure~\ref{c2} shows how to interchange $\pi(N_1)$ and $\pi(N_2)$ by TM0-moves and TM2-moves.  
\begin{figure}[h]
  \centering
    \includegraphics[width=10cm,height=4cm]{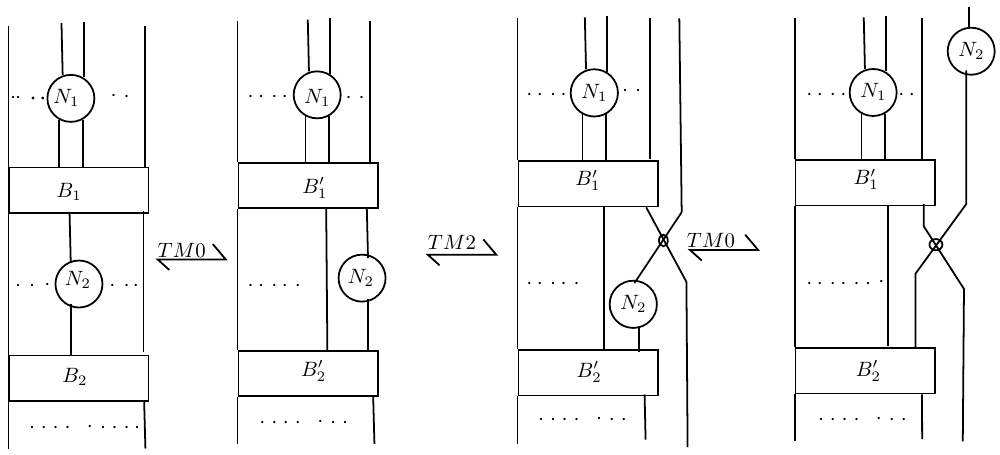}
        \caption{Interchange the positions of $N_1$ and $N_2$.}
        \label{c2}
\end{figure}

\sloppy{
Applying this argument, we can make $\pi(N_1), \pi(N_2), \ldots, \pi(N_m)$ and $\pi(N'_1), \pi(N'_2), \ldots, \pi(N'_m)$  to appear in the same cyclic order on $S^1$ using VM0 and VM2-moves.
 Then we can reduce the case to Case~(I). }
\end{proof}

\begin{proof}[Proof of Theorem~\ref{theorem:MarkovB}] 
If two closed twisted virtual braids (or their diagrams) are Markov equivalent then they are equivalent as twisted links.
Conversely, suppose that $K$ and $K'$ are closed twisted virtual braid diagrams which are equivalent as twisted links.  
There is a finite sequence of twisted link diagrams, say, $K=K_0, K_1, \ldots, K_n=K'$ such that $K_{i+1}$ is obtained from $K_{i}$ by one of the extended Reidemeister moves.  
 
For each $i = 1, \dots, n-1$, $K_i$ may not be a closed twisted virtual braid diagram.
Let $\widetilde K_i$ be a closed twisted virtual braid diagram obtained from $K_i$ by the braiding process in the previous section.
We assume $K_0=\widetilde K_0$ and $K_n =\widetilde K_n$. 
Then for each $i =0,1,\dots, n$, $\widetilde K_i$ and $K_i$ have the same Gauss data. 
It is sufficient to prove that $\widetilde K_i$ and $\widetilde K_{i+1}$ are Markov equivalent.
 
It is shown in \cite{sk} that  $\widetilde K_i$ and $\widetilde K_{i+1}$ are Markov equivalent when $K_{i+1}$ is obtained from $K_{i}$ by one of R1, R2, R3, V1, V2, V3, and V4.
(In \cite{sk} virtual links and closed virtual braid diagrams are discussed.
However the argument in \cite{sk} is valid in our current situation.)  

Thus, it is sufficient to consider a case that $K_{i+1}$ is obtained from $K_{i}$ by a twisted move T1, T2 or T3.  

(1)  Let $K_{i+1}$ be obtained by $K_i$ from a T1 move. 
Then $K_{i}$ and $K_{i+1}$ have same Gauss data, and hence $\widetilde K_{i}$ and $\widetilde K_{i+1}$ have same Gauss data.
By Lemma~\ref{lem:unique}, $\widetilde K_i$ and $\widetilde K_{i+1}$ are Markov equivalent.

(2) Let $K_{i+1}$ be obtained by $K_i$ by a T2 move. Assume that a pair of bars in $K_i$ is removed by the T2 move to obtain $K_{i+1}$.  
Let $N$ be a $2$-disk where the T2 move is applied such that $N \cap K_i$ is an arc, say $\alpha$, with two bars and $N \cap K_{i+1}$  is the arc $\alpha$. 
Let $N_1$ and $N_2$ be neighborhoods of the two bars such that $N_1 \cup N_2 \subset N$. 
By an isotopy of $\mathbb{R}^2$, deform $K_i$, $\alpha$ and $N$ such that $N \cap K_i$ is $\alpha$ with two bars and $\pi|_\alpha: \alpha \to S^1$ is an orientation-preserving embedding. 
Let $\widetilde K_i'$ be a closed twisted virtual braid obtained from the deformed $K_i$ by applying the braid process in the previous section such that $N$ is pointwise fixed, and let $\widetilde K_{i+1}'$ be a closed twisted virtual braid obtained from $\widetilde K_i'$ by removing the two bars intersecting $\alpha$.  
Then $\widetilde K_i'$ and $\widetilde K_{i+1}'$ are related by a TM0-move.   
Since $\widetilde K_i$ and $\widetilde K_i'$ have the same Gauss data, they are Markov equivalent. 
Since $\widetilde K_{i+1}$ and $\widetilde K_{i+1}'$ have the same Gauss data, they are Markov equivalent.
Thus $\widetilde K_i$ and $\widetilde K_{i+1}$ are Markov equivalent.  
The case that a pair of bars are introduced to $K_i$ to obtain $K_{i+1}$ is shown similarly.

(3) Let $K_{i+1}$ be obtained from $K_i$ by a T3 move.  
There are 4 possible orientations for a T3 move, say T3a, T3b, T3c, and T3d as in Figure~\ref{ot3m}. 
\begin{figure}[h]
  \centering
    \includegraphics[width=7cm,height=5cm]{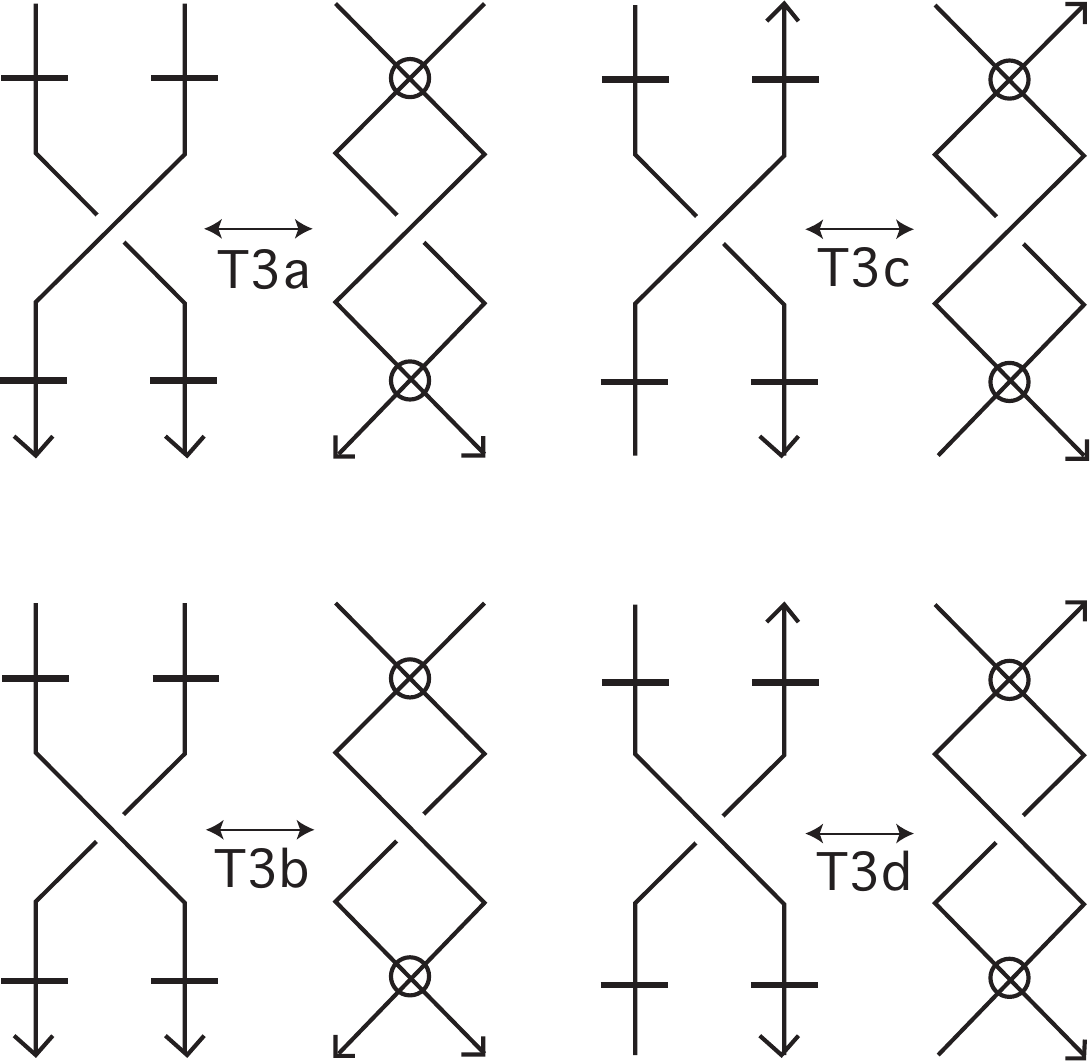}
        \caption{Oriented T3 moves.}
        \label{ot3m}
\end{figure}

First consider a case that $K_{i+1}$ is obtained from $K_i$ by a move T3a or T3b. 
Assume that $K_i$ is as in the left and $K_{i+1}$ is as in the right of Figure~\ref{ot3m}.  Let $N$ be a $2$-disk where the move is applied.  
Then $N \cap K_i$ is a pair of arcs, say $\alpha_1$ and $\alpha_2$, intersecting transversely at a real crossing and there are four bars. 
Let $N_1$ be a neighborhood of the real crossing of $K_i$ and $N_2, \dots, N_5$ be neighborhoods of the four bars of $K_i$ in $N$ such that $N_1 \cup \dots \cup N_5 \subset N$. 
By an isotopy of $\mathbb{R}^2$, deform $K_i$, $\alpha_1$, $\alpha_2$, and $N$ such that $\pi|_{\alpha_1}: \alpha_1 \to S^1$ and $\pi|_{\alpha_2}: \alpha_2 \to S^1$ are orientation-preserving embeddings. 
Let $\widetilde K_i'$ be a closed twisted virtual braid diagram obtained from the deformed $K_i$ by applying the braid process in the previous section such that $N$ is pointwise fixed, and let $\widetilde K_{i+1}'$ be a closed twisted virtual braid diagram obtained from $\widetilde K_i'$ by applying a T3a (or T3b) move.  
Then $\widetilde K_i'$ and $\widetilde K_{i+1}'$ are related by a TM0-move.   
Since $\widetilde K_i$ and $\widetilde K_i'$ have the same Gauss data, they are Markov equivalent. 
Since $\widetilde K_{i+1}$ and $\widetilde K_{i+1}'$ have the same Gauss data, they are Markov equivalent. Thus $\widetilde K_i$ and $\widetilde K_{i+1}$ are Markov equivalent.  
The case that $K_i$ is as in the right and $K_{i+1}$ is as in the left of the figure is shown similarly. 

Now consider the case that $K_{i+1}$ is obtained from $K_i$ by a move T3c or T3d. 
Note that a move T3c (or T3d) is a consequence of a move T3b (or T3a) modulo moves V1, V2, V3, and V4.  
One can see this by rotating the two diagrams in T3c (or T3d) by 90 degrees clockwise.
Then the left hand side becomes the same diagram with the left hand side of T3b (or T3a).
The right hand side of T3c (or T3d) after the rotation has a real crossing and no bars. One can see that the right hand side of T3b (or T3a) also has a real crossing and no bars. 
Considering the Gauss data of the tangle in $N$ and applying the same argument to the proof of Lemma~\ref{lem:unique}, we see that the right hand side of T3c (or T3d) after the rotation is transformed to the right hand side of T3b (or T3a) by  V1, V2, V3, and V4 moves in $N$. 
Thus we can reduce the case to T3a (or T3b) and the case of V1, V2, V3, and V4 moves.  
\end{proof}

\section{On virtual exchange moves of twisted virtual braids}
\label{sect:exchange}

It turns out that if two twisted virtual braids (or their diagrams) are related by a left virtual exchange move then they are related by a sequence of TM1-moves (or TM0-moves and TM1-moves) and a right virtual exchange move. Thus we may remove left virtual exchange moves from the definition of Markov equivalence. 

Let $f_n: TVB_n \to TVB_n$ be an isomorphism determined by 
\begin{align*}
\sigma_i & \mapsto \sigma_{n-i}, & \text{for } & i=1, \dots, n-1 \\ 
v_i & \mapsto v_{n-i}, & \text{for } & i=1, \dots, n-1 \\ 
\gamma_i & \mapsto \gamma_{n-i+1}, & \text{for } & i=1, \dots, n. 
\end{align*}

For a twisted virtual braid diagram $b$ of degree $n$ which is good,  
we also denote by $f_n(b)$ a twisted virtual braid diagram obtained from the diagram $b$ by applying the above correspondence to the preferred word of $b$.  

Let $\nabla_n$ be a twisted virtual braid (or its diagram) with  
\begin{align*}
\nabla_n  =  \prod_{i=1}^{n-1} (v_i v_{i-1} \dots v_1) \prod_{j=1}^{n} \gamma_j. 
\end{align*}

Let $F_n: TVB_n \to TVB_n$ be an isomorphism determined by 
\begin{align*}
b & \mapsto \nabla_n  b \nabla_n^{-1}  & \text{for } & b \in TVB_n. 
\end{align*}
Then $\nabla_n^2 = e$ in $TVB_n$ and $F_n(b) = f_n(b)$ for $b \in TVB_n$.  
In particular $b$ and $f_n(b)$ are related by a TM1-move (or TM0-moves and TM1-moves when we discuss them as diagrams).  

\begin{theorem} 
If two twisted virtual braids of degree $n$ (or their diagrams) are related by a left virtual exchange move, 
then they are related by a sequence of TM1-moves (or TM0-moves and TM1-moves) and a right virtual exchange move. 
\end{theorem}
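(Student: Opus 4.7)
The plan is to exploit the isomorphism $f_n$ to translate the left virtual exchange into a right virtual exchange, and then to realize $f_n$ itself via the conjugation $b \mapsto \nabla_n b \nabla_n^{-1}$, which is a TM1-move.

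First, I would verify the key compatibility
\[
f_n \circ \iota_1^0 \;=\; \iota_0^1 \circ f_{n-1} : TVB_{n-1} \to TVB_n.
\]
This is immediate on each standard generator; for instance $f_n(\iota_1^0(\sigma_i)) = f_n(\sigma_{i+1}) = \sigma_{n-i-1} = \iota_0^1(f_{n-1}(\sigma_i))$, and the analogous identities $f_n(\iota_1^0(v_i)) = v_{n-1-i} = \iota_0^1(f_{n-1}(v_i))$ and $f_n(\iota_1^0(\gamma_i)) = \gamma_{n-i} = \iota_0^1(f_{n-1}(\gamma_i))$ hold.

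Second, suppose $b, b' \in TVB_n$ are related by a left virtual exchange move, so that for some $b_1, b_2 \in TVB_{n-1}$,
\[
b = \iota_1^0(b_1)\,\sigma_1^{-1}\,\iota_1^0(b_2)\,\sigma_1, \qquad b' = \iota_1^0(b_1)\,v_1\,\iota_1^0(b_2)\,v_1.
\]
Applying $f_n$, combined with the compatibility above and the equalities $f_n(\sigma_1) = \sigma_{n-1}$, $f_n(v_1) = v_{n-1}$, gives
\begin{align*}
f_n(b) &= \iota_0^1(f_{n-1}(b_1))\,\sigma_{n-1}^{-1}\,\iota_0^1(f_{n-1}(b_2))\,\sigma_{n-1},\\
f_n(b') &= \iota_0^1(f_{n-1}(b_1))\,v_{n-1}\,\iota_0^1(f_{n-1}(b_2))\,v_{n-1},
\end{align*}
so that $f_n(b)$ and $f_n(b')$ are precisely the two sides of a right virtual exchange move.

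Third, by the already-stated identities $\nabla_n^2 = e$ and $F_n = f_n$, passage from $b$ to $f_n(b)$ is conjugation by $\nabla_n$, which is exactly a TM1-move; at the diagram level, TM0-moves may be inserted to reconcile the diagram $\nabla_n b \nabla_n^{-1}$ with the diagram $f_n(b)$ obtained from the preferred word of $b$. The same applies to $b'$. Concatenating
\[
b \;\longleftrightarrow\; f_n(b) \;\longleftrightarrow\; f_n(b') \;\longleftrightarrow\; b',
\]
in which the two outer steps are TM1-moves (or TM0- and TM1-moves for diagrams) and the middle step is a right virtual exchange move, yields the desired conclusion. The only real work is the generator-level check of $f_n \circ \iota_1^0 = \iota_0^1 \circ f_{n-1}$, which is routine; everything else is a one-line composition.
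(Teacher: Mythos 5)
Your proposal is correct and follows essentially the same route as the paper: apply $f_n$ to both sides of the left virtual exchange, observe (via the compatibility $f_n\circ\iota_1^0=\iota_0^1\circ f_{n-1}$) that the images form a right virtual exchange, and realize $f_n$ by the conjugation $F_n(b)=\nabla_n b\nabla_n^{-1}$, i.e.\ a TM1-move. Your explicit generator-level check of the compatibility, and your index $\sigma_{n-1}$ (rather than the paper's $\sigma_n$, which is a slight abuse for braids of degree $n$), are if anything a touch more careful than the paper's own write-up.
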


\begin{proof}
Let $b$ and $b'$ be twisted virtual braid diagrams of degree $n$ related by a left virtual exchange move. 
Suppose that 
$$ b= \iota_1^0(b_1) \sigma_1^{-1} \iota_1^0(b_2) \sigma_1 \quad \mbox{and} \quad 
b'=    \iota_1^0(b_1) v_1 \iota_1^0(b_2) v_1, $$ 
where $b_1$ and $b_2$ are good twisted virtual braid diagrams of degree~$n-1$.  
Then 
$$ f_n(b) = \iota_0^1( f_{n-1}(b_1) ) \sigma_n^{-1} \iota_0^1( f_{n-1}(b_2) ) \sigma_n 
\quad \mbox{and} \quad 
f_n(b') =  \iota_0^1( f_{n-1}(b_1) ) v_n \iota_0^1( f_{n-1}(b_2) ) v_n, $$ 
and hence $f_n(b)$ and $f_n(b')$ are related by a right virtual exchange move.   
Since $b$ is conjugate to $F_n(b) = f_n(b)$ as elements of $TVB_n$, and 
$b'$ is conjugate to $F_n(b') = f_n(b')$, we see that $b$ and $b'$ are related by a sequence of TM1-moves 
(or TM0-moves and TM1-moves when we discuss them as diagrams) and a right virtual exchange move. 
\end{proof}


\section{A reduced presentation of the twisted virtual braid group}
\label{sect:reduced}

L. Kauffman and S. Lambropoulou~\cite{kl} gave a reduced presentation of the virtual braid group. Motivated by their work, we give a reduced presentation of the twisted virtual braid group.  
Using the reduced presentation, one can deal the twisted virtual braid group with less number of generators and relations.

In this section, we show that the presentation 
of the twisted virtual braid group $TVB_n$
given in Theorem~\ref{thm:StandardPresentation} can be reduced to a presentation with $n+1$ generators and less 
relations by rewriting $\sigma_i$ $(i=2,\ldots, n-1)$
and $\gamma_i$ $(i=2,\ldots, n)$ in terms of $\sigma_1$, $\gamma_1$ and $v_1, \dots, v_{n-1}$  as follows:
\begin{align}
    \sigma_i & =(v_{i-1}\ldots v_1)(v_i \ldots v_2)\sigma_1(v_2  \ldots v_i)(v_1  \ldots v_{i-1}) & \text{ for } & i=2,\ldots, n-1,  \label{1st reduction} \\ 
    \gamma_i & =(v_{i-1}\ldots v_1)\gamma_1(v_1  \ldots v_{i-1}) & \text{ for } & i=2,\ldots, n.  \label{2nd reduction}
\end{align}
See Figure~\ref{o}. These can be seen geometrically from their diagrams or algebraically from $(\ref{relC-vsv})$ and $(\ref{relC-vb})$.

\begin{figure}[h]
  \centering
    \includegraphics[width=6cm,height=10cm]{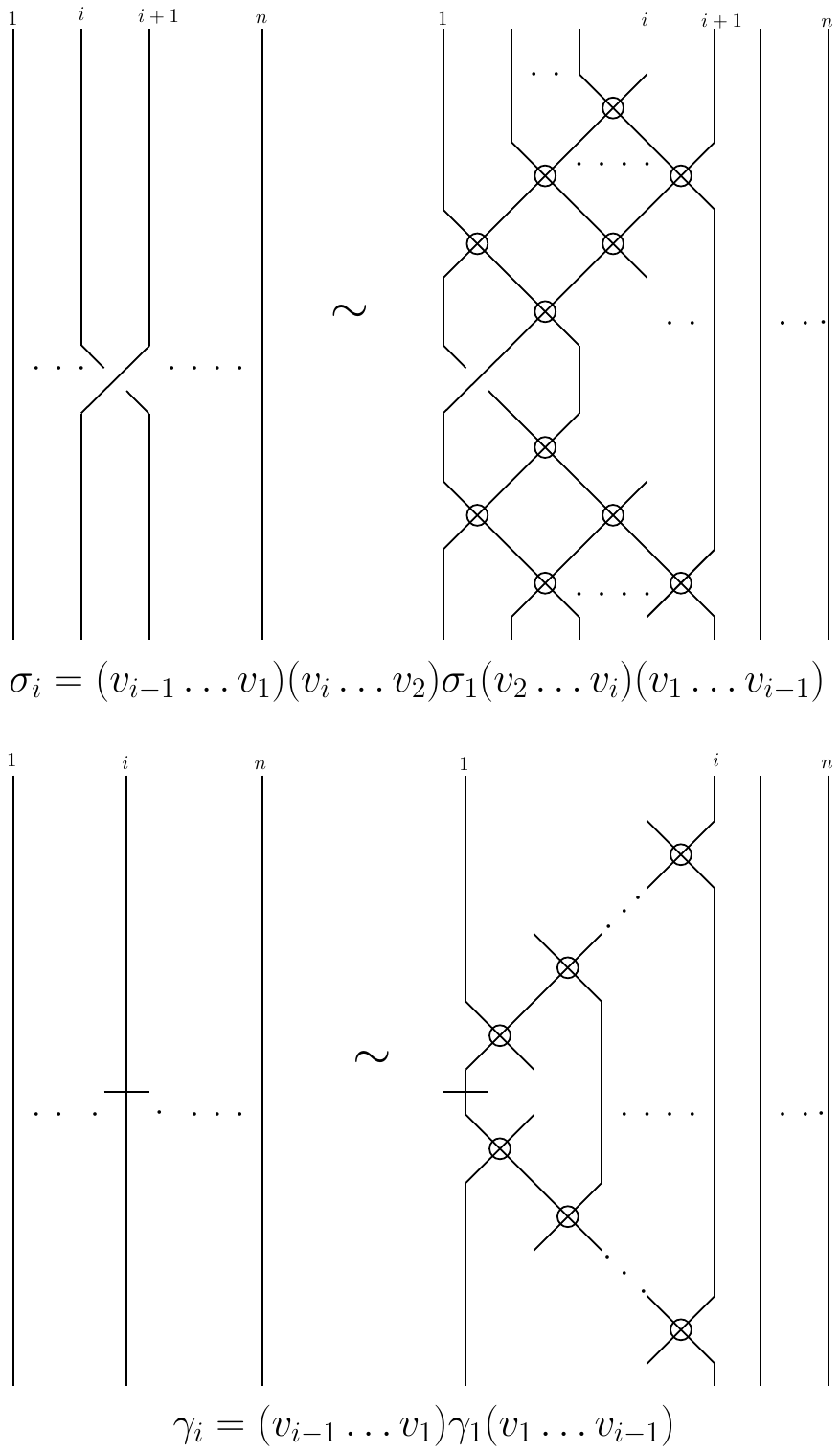}
        \caption{$\sigma_i \text{ and } \gamma_i$.}
        \label{o}
        \end{figure}

\begin{theorem}\label{thm:ReducedPresentation}
The twisted virtual braid group $TVB_n$ has a presentation whose generators are  
$\sigma_1, \gamma_1, v_1,\dots, v_{n-1}$   
and the defining relations are as follows:  
\begin{align}
 v_i^2  & = e   & \text{ for }  & i=1,\ldots, n-1;     \label{relB-inverse-v}\\
 v_iv_j  & = v_jv_i  & \text{ for } & |i-j| > 1 ;  \label{relB-height-vv}\\
 v_iv_{i+1}v_i & = v_{i+1}v_iv_{i+1} & \text{ for } & i=1,\ldots, n-2; \label{relB-vvv}\\
 \sigma_1(v_2v_3v_1v_2\sigma_1v_2v_1v_3v_2) & = (v_2v_3v_1v_2\sigma_1v_2v_1v_3v_2)\sigma_1, &  & \label{relB-height-ss} \\
  (v_1\sigma_1v_1)(v_2\sigma_{1}v_2)(v_1\sigma_1v_1) & = (v_2\sigma_1v_2)(v_1\sigma_{1}v_1)(v_2\sigma_1v_2), & & \label{relB-sss} \\
 \sigma_1v_j  & = v_j\sigma_1 & \text{ for } & j = 3, \ldots, n-1; \label{relB-height-sv}\\
 \gamma_1^2  & = e,  & &  \label{relB-inverse-b} \\
 \gamma_1v_j & =  v_j\gamma_1 & \text{ for } & j = 2, \ldots, n-1; & \label{relB-height-bv}\\
 \gamma_1v_1\gamma_1v_1 & =v_1\gamma_1v_1\gamma_1, & &  \label{relB-height-bb}\\
  \gamma_1v_1v_2\sigma_1v_2v_1 & = v_1v_2\sigma_1v_2v_1\gamma_1, & & \label{relB-height-sb}\\
  \gamma_{1}v_1\gamma_1\sigma_{1} \gamma_1v_1\gamma_{1} & = \sigma_1. & & \label{relB-bv}
  \end{align}
\end{theorem}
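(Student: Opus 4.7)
The plan is to identify the group $G$ defined by the reduced presentation with $TVB_n$ via Tietze transformations. Every listed relation (\ref{relB-inverse-v})--(\ref{relB-bv}) holds in $TVB_n$: some coincide with relations of Theorem~\ref{thm:StandardPresentation}, while the others are consequences of those together with (\ref{1st reduction}) and (\ref{2nd reduction}), which in turn follow from (\ref{relC-vsv}) and (\ref{relC-vb}). This yields a surjective homomorphism $\phi : G \to TVB_n$. For the inverse, I would introduce in $G$ the symbols $\sigma_i$ and $\gamma_i$ ($i \geq 2$) by the formulas (\ref{1st reduction}), (\ref{2nd reduction}), and verify that every defining relation (\ref{rel-height-ss})--(\ref{rel-twist-III}) of $TVB_n$ is a consequence of the listed relations in $G$.

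Several verifications are immediate: (\ref{rel-inverse-v}), (\ref{rel-height-vv}), (\ref{rel-vvv}) coincide with (\ref{relB-inverse-v}), (\ref{relB-height-vv}), (\ref{relB-vvv}); and (\ref{rel-vsv}), (\ref{rel-bv}), (\ref{rel-inverse-b}) reduce to tautologies after expanding by (\ref{1st reduction}) or (\ref{2nd reduction}) and cancelling $v_i^2$ or $\gamma_1^2$. A short calculation yields the useful recursive forms $\sigma_{i+1} = v_i v_{i+1} \sigma_i v_{i+1} v_i$ and $\gamma_{i+1} = v_i \gamma_i v_i$. The three purely virtual relations (\ref{rel-height-ss}), (\ref{rel-sss}), (\ref{rel-height-sv}) for general indices follow from their $i = 1$ instances (\ref{relB-height-ss}), (\ref{relB-sss}), (\ref{relB-height-sv}) by the argument of Kauffman--Lambropoulou~\cite{kl} for the virtual braid group, which applies verbatim here since the reduction formulas and virtual relations are identical to theirs. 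For illustration, at $i=1$ the identity $v_2 \sigma_1 v_2 = v_1 \sigma_2 v_1$, which is immediate from $\sigma_2 = v_1 v_2 \sigma_1 v_2 v_1$ and $v_1^2 = e$, rewrites (\ref{relB-sss}) as $v_1 \sigma_1 \sigma_2 \sigma_1 v_1 = v_1 \sigma_2 \sigma_1 \sigma_2 v_1$, that is, $\sigma_1 \sigma_2 \sigma_1 = \sigma_2 \sigma_1 \sigma_2$.

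The genuinely new content concerns the $\gamma$-relations (\ref{rel-height-bv}), (\ref{rel-height-sb}), (\ref{rel-height-bb}) and (\ref{rel-twist-III}). The first three at general indices reduce to their $i = 1$ instances (\ref{relB-height-bv}), (\ref{relB-height-sb}), (\ref{relB-height-bb}) by moving the relevant $\gamma$'s through the virtual words that define $\sigma_i$ and $\gamma_j$ using the commutation $\gamma_1 v_j = v_j \gamma_1$ ($j \geq 2$) and the virtual braid relations. The twist relation (\ref{rel-twist-III}) at $i = 1$ reduces via $\gamma_2 = v_1 \gamma_1 v_1$ and $v_1^2 = \gamma_1^2 = e$ to (\ref{relB-bv}); higher cases follow by conjugating the $i = 1$ identity with $L_i = (v_{i-1} \cdots v_1)(v_i \cdots v_2)$ and using the identities $L_i \sigma_1 L_i^{-1} = \sigma_i$, $L_i \gamma_1 L_i^{-1} = \gamma_i$, $L_i v_1 L_i^{-1} = v_i$ and $L_i \gamma_2 L_i^{-1} = \gamma_{i+1}$, each a short computation in the virtual generators. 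The main obstacle is the bookkeeping required for these conjugation arguments and for the Kauffman--Lambropoulou step: each $\sigma_i$ and $\gamma_i$ for $i \geq 3$ is a long word in the generators, and the required manipulations must be carried out systematically, though no new conceptual ideas are needed beyond a disciplined use of the listed relations.
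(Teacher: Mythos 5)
Your proposal is correct and follows essentially the same route as the paper: a Tietze-transformation argument that checks the reduced relations hold in $TVB_n$, invokes the Kauffman--Lambropoulou lemmas for the relations involving only $\sigma_i$ and $v_j$, and verifies the $\gamma$-relations and the twist relation by pushing $\gamma_1$ through virtual words via $\gamma_1 v_j = v_j\gamma_1$ ($j\ge 2$). The only cosmetic difference is that you package the general-index twist relation as conjugation by $L_i=(v_{i-1}\cdots v_1)(v_i\cdots v_2)$ (whose required conjugation identities do all follow from the listed relations), where the paper instead expands and cancels the word directly.
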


In what follows, we refer to relations $(\ref{rel-inverse-v})$, $(\ref{rel-height-vv})$ and $(\ref{rel-vvv})$ or 
equivalently $(\ref{relB-inverse-v})$, $(\ref{relB-height-vv})$ and $(\ref{relB-vvv})$ as the {\it virtual relations}.


\begin{lemma}[cf. \cite{kl}]
Relations $(\ref{rel-vsv})$ follow from relations $(\ref{1st reduction})$ and the virtual relations.  
\end{lemma}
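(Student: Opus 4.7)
The plan is to verify $(\ref{rel-vsv})$, namely $v_i \sigma_{i+1} v_i = v_{i+1} \sigma_i v_{i+1}$ for $i=1,\dots,n-2$, by substituting the rewriting rule $(\ref{1st reduction})$ for both $\sigma_i$ and $\sigma_{i+1}$ into the two sides and reducing each resulting word in $\{\sigma_1, v_1,\dots, v_{n-1}\}$ to a common normal form using only the virtual relations. The argument is a direct calculation; no clever manipulation is required.

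First I would expand
\[
\sigma_{i+1} = (v_i v_{i-1}\cdots v_1)(v_{i+1} v_i \cdots v_2)\,\sigma_1\,(v_2\cdots v_i v_{i+1})(v_1\cdots v_{i-1} v_i).
\]
In the word $v_i\sigma_{i+1}v_i$ the two outer $v_i$'s cancel against the $v_i$ at the left end of the first block and the $v_i$ at the right end of the last block, by $(\ref{relB-inverse-v})$. After this cancellation the product becomes
\[
(v_{i-1}\cdots v_1)(v_{i+1}v_i\cdots v_2)\,\sigma_1\,(v_2\cdots v_i v_{i+1})(v_1\cdots v_{i-1}).
\]

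Next I would expand $\sigma_i = (v_{i-1}\cdots v_1)(v_i\cdots v_2)\sigma_1(v_2\cdots v_i)(v_1\cdots v_{i-1})$ and consider $v_{i+1}\sigma_i v_{i+1}$. Since $v_{i+1}$ commutes with every $v_j$ for which $|i{+}1-j|>1$, i.e.\ for $j \le i-1$, by $(\ref{relB-height-vv})$, the left $v_{i+1}$ slides rightward through the entire block $(v_{i-1}\cdots v_1)$ and lands at the head of the block $(v_i\cdots v_2)$; symmetrically, the right $v_{i+1}$ slides leftward and settles at the tail of $(v_2\cdots v_i)$. The resulting word then coincides letter-for-letter with the expression computed above for $v_i\sigma_{i+1}v_i$, and the identity follows.

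The only real pitfall is index bookkeeping: one must track the telescoping ranges of the four blocks for general $i$, and in particular must never try to commute $v_{i+1}$ past $v_i$, which would require the non-abelian braid identity rather than a simple commutation. It is worth noting that the argument does not invoke $(\ref{relB-vvv})$ at all; only $(\ref{relB-inverse-v})$ and $(\ref{relB-height-vv})$ are used, so the lemma is actually established by a strict subset of the virtual relations.
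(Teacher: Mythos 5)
Your computation is correct and is exactly the direct verification the paper has in mind (the paper omits the proof, remarking only that the lemma "is directly seen"): cancel the outer $v_i$'s against the ends of the outer blocks of $\sigma_{i+1}$ via $v_i^2=e$, and slide the outer $v_{i+1}$'s of the other side through the blocks $(v_{i-1}\cdots v_1)$ and $(v_1\cdots v_{i-1})$ via the distant commutations. Your observation that only $(\ref{relB-inverse-v})$ and $(\ref{relB-height-vv})$ are needed, not $(\ref{relB-vvv})$, is also accurate.
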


This lemma is directly seen. The following three lemmas are proved in \cite{kl}. So we omit the proofs. 

\begin{lemma}[Lemma~1 of \cite{kl}]
Relations $(\ref{rel-height-sv})$ follow from relations $(\ref{1st reduction})$, the virtual relations, and  
relations $(\ref{relB-height-sv})$.  
\end{lemma}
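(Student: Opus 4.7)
The plan is to substitute the reduction formula for $\sigma_i$ into the desired identity $\sigma_i v_j = v_j \sigma_i$ and reduce matters to a calculation within the subgroup generated by the virtual generators $v_1, \ldots, v_{n-1}$. The case $i = 1$ (so $j \geq 3$) is exactly (\ref{relB-height-sv}), so we may assume $i \geq 2$ with either $j \leq i - 2$ or $j \geq i + 2$. Set $w_i = (v_{i-1} \ldots v_1)(v_i \ldots v_2)$, so that (\ref{1st reduction}) reads $\sigma_i = w_i \sigma_1 w_i^{-1}$, with $w_i^{-1} = (v_2 \ldots v_i)(v_1 \ldots v_{i-1})$ by (\ref{relB-inverse-v}). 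Then $\sigma_i v_j = v_j \sigma_i$ is equivalent to
\[ \sigma_1 \cdot (w_i^{-1} v_j w_i) = (w_i^{-1} v_j w_i) \cdot \sigma_1, \]
so it suffices to show, using only the virtual relations, that $w_i^{-1} v_j w_i = v_\ell$ for some $\ell \geq 3$; the commutation will then follow from (\ref{relB-height-sv}).

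The key observation is that the virtual relations (\ref{relB-inverse-v})--(\ref{relB-vvv}) are precisely the Coxeter presentation of $S_n$, so the subgroup of $TVB_n$ generated by the $v$-generators is a quotient of $S_n$ in which any two $v$-words representing the same permutation are equal. A direct calculation of the permutation induced by $w_i$ shows that it sends $1 \mapsto i$, $2 \mapsto i+1$, $k \mapsto k - 2$ for $3 \leq k \leq i + 1$, and fixes every $k \geq i + 2$. Consequently, the conjugate transposition $w_i^{-1} (j,\,j+1) w_i$ equals $(j,\,j+1)$ in case $j \geq i + 2$, and equals $(j+2,\,j+3)$ in case $j \leq i - 2$. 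In either case the result is a simple transposition $s_\ell$ with $\ell \geq 3$.

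Hence $w_i^{-1} v_j w_i = v_\ell$ holds in the $v$-subgroup, and relation (\ref{relB-height-sv}) then yields the desired commutation with $\sigma_1$, completing the derivation of (\ref{rel-height-sv}). The main obstacle is converting the $S_n$-level equality $w_i^{-1} s_j w_i = s_\ell$ into an explicit $v$-word reduction using only the virtual relations. The case $j \geq i + 2$ is immediate: every generator appearing in $w_i^{\pm 1}$ has index at most $i \leq j - 2$, so it commutes with $v_j$ by (\ref{relB-height-vv}), and the word collapses through $v_k^2 = e$. The case $j \leq i - 2$ is handled by pushing $v_j$ through the two nested cycles in $w_i^{-1}$ and $w_i$, applying (\ref{relB-vvv}) each time $v_j$ meets an adjacent generator and (\ref{relB-height-vv}) otherwise; this can be arranged inductively on $i - j$, and is the same rewriting performed in the virtual-braid setting of \cite{kl}.
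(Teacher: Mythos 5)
Your proposal is correct. Note that the paper itself gives no proof of this lemma: it defers to Lemma~1 of \cite{kl}, where (as in the analogous lemmas the paper does prove, e.g.\ the one for relations (\ref{rel-height-bv})) the argument is an explicit word rewriting, splitting into the cases $j\leq i-2$ and $j\geq i+2$ and pushing $v_j$ through the word $(v_{i-1}\ldots v_1)(v_i\ldots v_2)$ step by step using (\ref{relB-height-vv}) and (\ref{relB-vvv}). Your route reaches the same reduction --- namely that $w_i^{-1}v_jw_i$ equals $v_j$ when $j\geq i+2$ and $v_{j+2}$ when $j\leq i-2$, after which (\ref{relB-height-sv}) applies since the resulting index is at least $3$ (and at most $n-1$) --- but you justify the $v$-word identity conceptually: the virtual relations are the Coxeter presentation of $S_n$, so any two $v$-words inducing the same permutation are equal modulo those relations, and the permutation computation for $w_i$ ($1\mapsto i$, $2\mapsto i+1$, $k\mapsto k-2$ for $3\leq k\leq i+1$, identity above $i+1$) is correct. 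This buys you a shorter and less error-prone verification at the cost of invoking the (standard) fact that the Coxeter relations present $S_n$; the explicit rewriting in \cite{kl} stays entirely inside the given presentation and matches the computational style of the other lemmas in Section~\ref{sect:reduced}. Your handling of the base case $i=1$ (where the claim is literally (\ref{relB-height-sv})) and your sketch of the direct rewriting as a cross-check are both sound, so there is no gap.
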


\begin{lemma}[Lemma~3 of \cite{kl}] 
Relations $(\ref{rel-height-ss})$ follow from relations $(\ref{1st reduction})$, the virtual relations, and 
relations $(\ref{relB-height-ss})$ and $(\ref{relB-height-sv})$.  
\end{lemma}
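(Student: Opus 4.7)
The plan is to reduce the general commutation relation $\sigma_i \sigma_j = \sigma_j \sigma_i$ (for $|i-j|>1$) to the restricted family $\sigma_1 \sigma_j = \sigma_j \sigma_1$ with $j \geq 3$, and then to prove this family by induction on $j$ with $(\ref{relB-height-ss})$ serving as the base case.

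For the reduction, I would assume without loss of generality $i < j$, so that $j \geq i+2$, and write $\sigma_i = A_i \sigma_1 A_i^{-1}$ using $(\ref{1st reduction})$, where $A_i := (v_{i-1}\cdots v_1)(v_i \cdots v_2)$ is a word in $v_1,\dots,v_i$. Every letter $v_k$ appearing in $A_i^{\pm 1}$ then satisfies $k \leq i \leq j-2$, hence $|k-j| \geq 2$, and the preceding lemma (which establishes $(\ref{rel-height-sv})$: $\sigma_j v_k = v_k \sigma_j$ for $|k-j|>1$) lets $A_i^{\pm 1}$ pass through $\sigma_j$. This yields $\sigma_i \sigma_j = A_i (\sigma_1 \sigma_j) A_i^{-1}$ and $\sigma_j \sigma_i = A_i (\sigma_j \sigma_1) A_i^{-1}$, so the equality of the two sides reduces to $\sigma_1 \sigma_j = \sigma_j \sigma_1$.

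For the base case $j=3$ of the induction, substituting $\sigma_3 = v_2 v_1 v_3 v_2 \sigma_1 v_2 v_3 v_1 v_2$ from $(\ref{1st reduction})$ and using $v_1 v_3 = v_3 v_1$ to rewrite the outer factors as $v_2 v_3 v_1 v_2 \sigma_1 v_2 v_1 v_3 v_2$ makes $\sigma_1 \sigma_3 = \sigma_3 \sigma_1$ coincide with $(\ref{relB-height-ss})$. For $j \geq 4$, I would first derive the shift identity $\sigma_j = v_{j-1} v_j \sigma_{j-1} v_j v_{j-1}$ from $(\ref{1st reduction})$ and the virtual relations, using that $v_j$ commutes with $v_1,\ldots,v_{j-2}$ in order to pull $v_{j-1}$ and $v_j$ out of the factors of $A_j$ and $A_j^{-1}$. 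Since $j-1, j \geq 3$, relations $(\ref{relB-height-sv})$ let $\sigma_1$ commute past both $v_{j-1}$ and $v_j$, so
\[ \sigma_1 \sigma_j \;=\; v_{j-1} v_j (\sigma_1 \sigma_{j-1}) v_j v_{j-1}; \]
the induction hypothesis then turns this into $v_{j-1} v_j \sigma_{j-1} v_j v_{j-1} \sigma_1 = \sigma_j \sigma_1$.

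The main obstacle is verifying the shift identity $\sigma_j = v_{j-1} v_j \sigma_{j-1} v_j v_{j-1}$ from $(\ref{1st reduction})$. The manipulation is entirely mechanical but demands careful bookkeeping of which $v_k$'s commute past which. Once the shift identity is established and the index ranges for invoking $(\ref{relB-height-sv})$ and the previous lemma at each inductive step are confirmed, the argument runs through without further trouble.
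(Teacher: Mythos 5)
Your proof is correct; the paper itself omits this proof, deferring to Lemma~3 of Kauffman--Lambropoulou, and your argument — conjugate $\sigma_i$ to $\sigma_1$ by the $v$-word from $(\ref{1st reduction})$, pass that word through $\sigma_j$ via the previously established $(\ref{rel-height-sv})$, and then prove $\sigma_1\sigma_j=\sigma_j\sigma_1$ by induction on $j$ using the shift identity $\sigma_j=v_{j-1}v_j\sigma_{j-1}v_jv_{j-1}$ with $(\ref{relB-height-ss})$ as the base case $j=3$ — is essentially the standard argument given there. All the index checks work out (every $v_k$ in the conjugating word has $k\le i\le j-2$, and $j-1,j\ge 3$ in the inductive step, so $(\ref{relB-height-sv})$ applies), and the shift identity does follow from $(\ref{1st reduction})$ and the virtual relations as you claim.
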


\begin{lemma}[Lemma~2 of \cite{kl}] 
Relations $(\ref{rel-sss})$ follow from  relations $(\ref{1st reduction})$, the virtual relations, and 
relations $(\ref{relB-sss})$ and $(\ref{relB-height-sv})$.  
\end{lemma}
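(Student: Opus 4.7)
My plan is induction on $i$, with the base case $i=1$ handled directly by relation $(\ref{relB-sss})$ and the inductive step reducing each index to its predecessor by a conjugation-style simplification. Throughout I freely use the virtual relations together with $(\ref{rel-vsv})$ and $(\ref{rel-height-sv})$, which have just been made available by the preceding lemmas.

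For the base case, substitute $\sigma_2 = v_1 v_2 \sigma_1 v_2 v_1$ (the $i=2$ instance of $(\ref{1st reduction})$) into $\sigma_1 \sigma_2 \sigma_1 = \sigma_2 \sigma_1 \sigma_2$ and conjugate both sides by $v_1$. Using $v_1^2 = e$ to absorb the doubled $v_1$'s, the target identity becomes
\[ (v_1\sigma_1 v_1)(v_2\sigma_1 v_2)(v_1\sigma_1 v_1) = (v_2\sigma_1 v_2)(v_1\sigma_1 v_1)(v_2\sigma_1 v_2), \]
which is exactly $(\ref{relB-sss})$.

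For the inductive step with $i \geq 2$, write $\sigma_i = v_{i-1} v_i \sigma_{i-1} v_i v_{i-1}$, a rearrangement of $(\ref{rel-vsv})$, and substitute into both sides of $\sigma_i \sigma_{i+1} \sigma_i = \sigma_{i+1} \sigma_i \sigma_{i+1}$. Since $|(i+1)-(i-1)| = 2$, relation $(\ref{rel-height-sv})$ lets $\sigma_{i+1}$ commute past $v_{i-1}$ (so that $v_{i-1} \sigma_{i+1} v_{i-1} = \sigma_{i+1}$) and lets $\sigma_{i-1}$ commute past $v_{i+1}$. Combined with $v_i \sigma_{i+1} v_i = v_{i+1} \sigma_i v_{i+1}$ from $(\ref{rel-vsv})$, both sides reduce to the common outer conjugator $v_{i-1} v_i v_{i+1} \,(\cdot)\, v_{i+1} v_i v_{i-1}$, with $\sigma_{i-1}\sigma_i\sigma_{i-1}$ in the center of the left-hand side and $\sigma_i\sigma_{i-1}\sigma_i$ in the center of the right-hand side. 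The inductive hypothesis then closes the argument.

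The main obstacle is pure bookkeeping: one must apply the identities $v_{i-1}\sigma_{i+1}v_{i-1} = \sigma_{i+1}$ and $v_i\sigma_{i+1}v_i = v_{i+1}\sigma_i v_{i+1}$ in the correct order on each side so that the outer conjugating word $v_{i-1} v_i v_{i+1}$ factors out symmetrically and the residual identity is precisely the braid relation one index lower. No relation beyond those already established is required.
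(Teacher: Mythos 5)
The paper gives no proof of this lemma, deferring to Lemma~2 of \cite{kl}; your argument is correct and is essentially that standard one. Both the base case (conjugating $\sigma_1\sigma_2\sigma_1=\sigma_2\sigma_1\sigma_2$ by $v_1$ and absorbing $v_1^2=e$ to land exactly on $(\ref{relB-sss})$) and the inductive step (the conjugator $w=v_{i+1}v_iv_{i-1}$ satisfies $w^{-1}\sigma_{i-1}w=\sigma_i$ and $w^{-1}\sigma_iw=\sigma_{i+1}$ by combining $(\ref{rel-vsv})$ with the commutations $\sigma_{i\pm1}v_{i\mp1}=v_{i\mp1}\sigma_{i\pm1}$ supplied by $(\ref{rel-height-sv})$, so it carries the braid relation at level $i-1$ to level $i$) check out, and every auxiliary relation you invoke is available from the hypotheses via the preceding lemmas.
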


In the following proofs, we underline the expressions which we focus on.

\begin{lemma} 
Relations $(\ref{rel-inverse-b})$ follow from relations $(\ref{2nd reduction})$, the virtual relations, and 
relation $(\ref{relB-inverse-b})$. 
\end{lemma}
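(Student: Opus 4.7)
The plan is a direct substitution followed by a telescoping cancellation using only the relation $v_j^2 = e$.

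First I would use relation $(\ref{2nd reduction})$ to rewrite
\[
\gamma_i^2 = \bigl((v_{i-1}\ldots v_1)\,\gamma_1\,(v_1\ldots v_{i-1})\bigr)\bigl((v_{i-1}\ldots v_1)\,\gamma_1\,(v_1\ldots v_{i-1})\bigr)
\]
for $i=2,\ldots,n$. The goal is then to collapse the middle factor
\[
\underline{(v_1\ldots v_{i-1})(v_{i-1}\ldots v_1)}
\]
to the identity. This is done by induction on $i$: the innermost pair is $v_{i-1}v_{i-1}=e$ by relation $(\ref{relB-inverse-v})$, after which one is left with $(v_1\ldots v_{i-2})(v_{i-2}\ldots v_1)$, and the same argument applies recursively. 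No braid-type relation or commutation is needed, only repeated application of $v_j^2=e$.

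Once the middle block collapses, the expression reduces to
\[
\gamma_i^2 = (v_{i-1}\ldots v_1)\,\underline{\gamma_1^2}\,(v_1\ldots v_{i-1}),
\]
and by relation $(\ref{relB-inverse-b})$ the underlined factor is $e$. A second application of the same telescoping argument to $(v_{i-1}\ldots v_1)(v_1\ldots v_{i-1})$ (which is the reverse of the previous product, but identical after applying $v_j^2=e$ from the innermost pair outward) yields $\gamma_i^2 = e$, as required.

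There is no substantive obstacle here; the only thing to be careful about is ordering the cancellations from the innermost pair outward, since the two blocks $(v_{i-1}\ldots v_1)$ and $(v_1\ldots v_{i-1})$ are mirror images of each other, making the telescoping legitimate without invoking any commutation or braid relations among the $v_j$'s.
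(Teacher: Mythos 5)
Your proof is correct and follows exactly the same route as the paper: substitute $(\ref{2nd reduction})$, telescope the middle block $(v_1\ldots v_{i-1})(v_{i-1}\ldots v_1)$ to $e$ using only $v_j^2=e$, apply $\gamma_1^2=e$, then telescope the remaining outer block. The paper's proof is precisely this four-line computation, so nothing further is needed.
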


\begin{proof}
\begin{align*}
    \gamma_i^2 
           & = (v_{i-1}\ldots v_1)\gamma_1 \underline{(v_1 \ldots v_{i-1})(v_{i-1}\ldots v_1)}\gamma_1(v_1 \ldots v_{i-1})\\
           & = (v_{i-1}\ldots v_1)\underline{\gamma_1^2}(v_1 \ldots v_{i-1})\\
           & = \underline{(v_{i-1}\ldots v_1)(v_1 \ldots v_{i-1})}\\
           & = e.
\end{align*}
\end{proof}

\begin{lemma}
Relations $(\ref{rel-height-bv})$ follow from relations $(\ref{2nd reduction})$, the virtual relations, and relations $(\ref{relB-height-bv})$.
\end{lemma}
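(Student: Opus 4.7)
The plan is to treat $j = 1$ and $j \geq 2$ separately. The case $j = 1$ is immediate: the restriction $j \neq i, i+1$ with $j = 1$ forces $i \geq 2$, and for such $i$ the relation $\gamma_1 v_i = v_i \gamma_1$ is exactly $(\ref{relB-height-bv})$. For $j \geq 2$, I would expand $\gamma_j$ using $(\ref{2nd reduction})$ to write
\[
\gamma_j v_i \;=\; (v_{j-1}\cdots v_1)\,\gamma_1\,(v_1\cdots v_{j-1})\, v_i ,
\]
and then slide $v_i$ leftward through the word until it emerges on the left of $(v_{j-1}\cdots v_1)\,\gamma_1\,(v_1\cdots v_{j-1}) = \gamma_j$.

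I would split this further into two subcases according to whether $j \leq i-1$ or $j \geq i+2$. In the first subcase $v_i$ commutes with every $v_k$ for $k \in \{1,\dots,j-1\}$ by $(\ref{relB-height-vv})$ (since $|i-k|\geq 2$), and with $\gamma_1$ by $(\ref{relB-height-bv})$ (since $i \geq j+1 \geq 3$). Thus $v_i$ passes straight through, yielding $v_i\gamma_j$ immediately.

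The harder subcase is $j \geq i+2$, where $v_{i+1}$ sits inside the product $(v_1 \cdots v_{j-1})$. For this I would first establish, purely from the virtual relations, the commutation identity
\[
(v_1 v_2 \cdots v_{j-1})\, v_i \;=\; v_{i+1}\,(v_1 v_2 \cdots v_{j-1}) \qquad (1 \le i \le j-2),
\]
by pushing $v_i$ past the commuting tail $v_{i+2}\cdots v_{j-1}$, applying the braid relation $v_i v_{i+1} v_i = v_{i+1} v_i v_{i+1}$ from $(\ref{relB-vvv})$, and then pulling the newly produced leading $v_{i+1}$ past the commuting prefix $v_1 \cdots v_{i-1}$. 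The mirror identity $(v_{j-1}\cdots v_1)\, v_{i+1} = v_i\,(v_{j-1}\cdots v_1)$ follows on inversion. Substituting both into the expansion of $\gamma_j v_i$ and then applying $(\ref{relB-height-bv})$ once to commute $\gamma_1$ past the isolated $v_{i+1}$ (valid because $2 \leq i+1 \leq j-1 \leq n-1$) reassembles $v_i\gamma_j$ on the right.

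The main obstacle I foresee is simply the bookkeeping verification of this commutation identity in the harder subcase; everything else is a routine chain of substitutions using the virtual relations and $(\ref{relB-height-bv})$.
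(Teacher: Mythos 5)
Your proposal is correct and follows essentially the same route as the paper: expand $\gamma_j$ via $(\ref{2nd reduction})$ and slide $v_i$ through using the virtual relations and $(\ref{relB-height-bv})$, splitting into the cases $j\le i-1$ and $j\ge i+2$. The only cosmetic differences are that you isolate $j=1$ up front (the paper absorbs it into the first case, where the conjugating words are empty) and you package the key step of the hard case as the standalone identity $(v_1\cdots v_{j-1})v_i=v_{i+1}(v_1\cdots v_{j-1})$, which the paper instead verifies inline.
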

\begin{proof}
Since  $j \neq i, i + 1$, we consider the following two cases.

Case(i) Suppose $j\leq i-1$.  Then $i \geq 2$ and we have 
\begin{align*}
    v_i\gamma_j & = \underline{v_i}(v_{j-1}\ldots v_1)\gamma_1(v_1 \ldots v_{j-1})\\
           & = (v_{j-1}\ldots v_1)\underline{v_i\gamma_1}(v_1 \ldots v_{j-1})\\
           & = (v_{j-1}\ldots v_1)\gamma_1\underline{v_i}(v_1 \ldots v_{j-1})\\
           & = (v_{j-1}\ldots v_1)\gamma_1(v_1 \ldots v_{j-1})v_i\\
           & = \gamma_jv_i.
\end{align*}

Case(ii) Suppose $j\geq i+2$. Then 
\begin{align*}
    v_i\gamma_j & = \underline{v_i}(v_{j-1}\ldots v_1)\gamma_1(v_1 \ldots v_{j-1})\\
           & = (v_{j-1}\ldots v_{i+2}\underline{v_iv_{i+1}v_i}v_{i-1} \ldots v_1)\gamma_1(v_1 \ldots v_{j-1})\\
           & = (v_{j-1}\ldots v_{i+2}v_{i+1}v_i\underline{v_{i+1}}v_{i-1} \ldots v_1)\gamma_1(v_1 \ldots v_{j-1})\\
           & = (v_{j-1}\ldots v_{i+2}v_{i+1}v_iv_{i-1} \ldots v_1)\underline{v_{i+1}\gamma_1}(v_1 \ldots v_{j-1})\\
           & = (v_{j-1}\ldots v_1)\gamma_1\underline{v_{i+1}}(v_1 \ldots v_{j-1})\\
           & = (v_{j-1}\ldots v_1)\gamma_1(v_1 \ldots v_{i-1}\underline{v_{i+1}v_iv_{i+1}}v_{i+2} \ldots v_{j-1})\\
           & = (v_{j-1}\ldots v_1)\gamma_1(v_1 \ldots v_{i-1}v_iv_{i+1}\underline{v_{i}}v_{i+2} \ldots v_{j-1})\\
           & = (v_{j-1}\ldots v_1)\gamma_1(v_1 \ldots v_{j-1})v_i\\
           & = \gamma_jv_i.
\end{align*}
\end{proof}

\begin{lemma}
Relations $(\ref{rel-height-bb})$ 
follow from relations $(\ref{2nd reduction})$, the virtual relations, and  relations $(\ref{relB-height-bv})$
and $(\ref{relB-height-bb})$.
\end{lemma}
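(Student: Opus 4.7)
The goal is $\gamma_i\gamma_j = \gamma_j\gamma_i$ for all $i,j \in \{1,\dots,n\}$. The case $i=j$ is trivial, so by symmetry I may assume $i<j$ and proceed by induction on $j$. Throughout I use two tools: from $(\ref{2nd reduction})$ one immediately obtains the recursive form $\gamma_k = v_{k-1}\gamma_{k-1}v_{k-1}$ for $k \geq 2$; and the preceding lemma furnishes $(\ref{rel-height-bv})$, so $\gamma_k$ commutes with $v_\ell$ whenever $k \neq \ell,\ell+1$. For the base case $j=2$ (hence $i=1$), replacing $\gamma_2$ by $v_1\gamma_1v_1$ turns the desired identity into $\gamma_1 v_1 \gamma_1 v_1 = v_1\gamma_1 v_1\gamma_1$, which is exactly $(\ref{relB-height-bb})$.

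For the inductive step $j\geq 3$, I split on whether $j-i \geq 2$ or $j-i = 1$. In the non-adjacent subcase $i \leq j-2$, rewrite $\gamma_j = v_{j-1}\gamma_{j-1}v_{j-1}$; since $i \neq j-1, j$, relation $(\ref{rel-height-bv})$ lets $\gamma_i$ commute past each $v_{j-1}$, so
\begin{align*}
\gamma_i\gamma_j \;=\; v_{j-1}\gamma_i\gamma_{j-1}v_{j-1} \;=\; v_{j-1}\gamma_{j-1}\gamma_i v_{j-1} \;=\; \gamma_j\gamma_i,
\end{align*}
where the middle equality is the inductive hypothesis applied to the pair $(i, j-1)$.

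The adjacent subcase $i = j-1$ is the technical core. Abbreviate $A = v_{j-2}$, $B = v_{j-1}$, $g = \gamma_{j-2}$; then $\gamma_{j-1} = AgA$ and $\gamma_j = BAgAB$, so $\gamma_{j-1}\gamma_j = AgABAgAB$ and $\gamma_j\gamma_{j-1} = BAgABAgA$. I then normalize both sides symmetrically using the braid relation $ABA = BAB$, the involutions $A^2 = B^2 = e$, and the commutation $gB = Bg$ (valid by $(\ref{rel-height-bv})$ since $j-2 \neq j-1, j$). After cancelling common prefixes and suffixes, the required identity collapses to $gAgA = AgAg$, i.e.\ to $\gamma_{j-2}\gamma_{j-1} = \gamma_{j-1}\gamma_{j-2}$, which is available by the inductive hypothesis applied to the pair $(j-2, j-1)$ (legal since $j \geq 3$).

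The main obstacle is the adjacent subcase: although each individual move is routine, one has to orchestrate the braid, commutation, and involution moves so that the middle portions of the two expanded words simultaneously collapse to the recognisable pattern $gAgA$ versus $AgAg$, rather than to unrelated strings. Once that reduction is in place, the double induction closes cleanly: the adjacent step feeds the non-adjacent step at the next level, and the non-adjacent step feeds the adjacent step in turn.
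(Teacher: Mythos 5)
Your proof is correct, but it takes a genuinely different route from the paper's. The paper proves $\gamma_i\gamma_j=\gamma_j\gamma_i$ (for $j>i$) by a single direct computation: it expands \emph{both} $\gamma_i$ and $\gamma_j$ via the full form of $(\ref{2nd reduction})$, shuffles the resulting conjugating words of $v$'s using the virtual relations and $(\ref{rel-height-bv})$ until a block $\gamma_1v_1\gamma_1$ is isolated, applies $(\ref{relB-height-bb})$ exactly once in the middle, and reassembles the word into $\gamma_j\gamma_i$ --- no induction is involved. You instead use only the one-step recursion $\gamma_k=v_{k-1}\gamma_{k-1}v_{k-1}$ and induct on the larger index, so that the whole computation is localized: the non-adjacent case is a two-line commutation argument, and the only genuine word manipulation occurs in the adjacent case, in the three-letter alphabet $A=v_{j-2}$, $B=v_{j-1}$, $g=\gamma_{j-2}$. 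I checked that reduction: the left side normalizes to $ABgAgABA$ and the right side to $ABAgAgBA$ using $ABA=BAB$ and $gB=Bg$, and cancelling gives precisely $gAgA=AgAg$, i.e.\ $\gamma_{j-2}\gamma_{j-1}=\gamma_{j-1}\gamma_{j-2}$, which is supplied by the inductive hypothesis; the base case $j=2$ is exactly $(\ref{relB-height-bb})$. Your appeal to $(\ref{rel-height-bv})$ is legitimate for the same reason as in the paper (it is established in the preceding lemma from the permitted relations). The trade-off: the paper's proof is self-contained in one display but requires tracking long products of $v$'s; yours replaces that bookkeeping with an induction whose only cost is the slightly delicate adjacent-case normalization, and it makes transparent that $(\ref{relB-height-bb})$ is needed only once, at the bottom of the recursion.
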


\begin{proof} By the previous lemma, we may assume relations $(\ref{rel-height-bv})$.  
It is sufficient to consider a case of $j > i$. 
\begin{align*} 
    \gamma_i\gamma_j 
  & = (v_{i-1}\ldots v_1) \gamma_1 (\underline{v_1 \ldots v_{i-1}}) (\underline{v_{j-1} \ldots v_1}) \gamma_1 (v_1 \ldots v_{j-1})\\
  & = (v_{i-1}\ldots v_1) \underline{\gamma_1} (v_{j-1} \ldots v_1) (v_2 \ldots v_i)  \underline{\gamma_1}  (v_1 \ldots v_{j-1}) 
  ~~~~~~~~~~~ \text{ (by (\ref{rel-height-bv}))}  \\
 & = (v_{i-1}\ldots v_1)  (v_{j-1} \ldots v_2) \underline{\gamma_1 v_1 \gamma_1}  (v_2 \ldots v_i)   (v_1 \ldots v_{j-1}) 
  ~~~~~~~~~~~ \text{ (by (\ref{relB-height-bb}))}      \\
  & = (\underline{v_{i-1}\ldots v_1})  \underline{(v_{j-1} \ldots v_2) v_1} \gamma_1 v_1 \gamma_1 v_1 (v_2 \ldots v_i)   (v_1 \ldots v_{j-1}) \\
  & =  (v_{j-1} \ldots v_2 v_1) (v_i \ldots v_2)  \gamma_1 v_1 \gamma_1 v_1 (\underline{v_2 \ldots v_i})   (\underline{v_1 \ldots v_{j-1}})  \\
  & =  (v_{j-1} \ldots v_2 v_1) (v_i \ldots v_2)  \gamma_1 v_1 \gamma_1 \underline{v_1}  (\underline{v_1} \ldots v_{j-1})  (v_1 \ldots v_{i-1})   \\
  & =  (v_{j-1} \ldots v_2 v_1) (v_i \ldots v_2)  \underline{\gamma_1} v_1 \underline{\gamma_1}  (v_2 \ldots v_{j-1})  (v_1 \ldots v_{i-1})  
 ~~~~~~~~~~~ \text{ (by (\ref{rel-height-bv}))}   \\ 
  & =  (v_{j-1} \ldots v_2 v_1) \gamma_1 (\underline{v_i \ldots v_2})   \underline{v_1  (v_2 \ldots v_{j-1})} \gamma_1  (v_1 \ldots v_{i-1})   \\ 
  & =  (v_{j-1} \ldots v_2 v_1) \gamma_1 v_1  (v_2 \ldots v_{j-1})  (v_{i-1} \ldots v_1)    \gamma_1  (v_1 \ldots v_{i-1})   \\ 
           & = \gamma_j\gamma_i.
\end{align*}
\end{proof}

\begin{lemma}
Relations $(\ref{rel-height-sb})$ 
follow from relations $(\ref{1st reduction})$, $(\ref{2nd reduction})$, the virtual relations, relations 
$(\ref{relB-height-sv})$, 
$(\ref{relB-height-bv})$ and 
$(\ref{relB-height-sb})$.  
\end{lemma}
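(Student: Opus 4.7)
The plan is to prove $\sigma_i\gamma_j=\gamma_j\sigma_i$ for all $1\leq i\leq n-1$, $1\leq j\leq n$ with $j\neq i,i+1$ by induction on $i+j$, using two recursive rewriting identities: $\gamma_k=v_{k-1}\gamma_{k-1}v_{k-1}$ for $k\geq 2$, which comes from $(\ref{2nd reduction})$ together with the virtual relations; and $\sigma_k=v_{k-1}v_k\sigma_{k-1}v_kv_{k-1}$ for $k\geq 2$, which comes from $(\ref{1st reduction})$ together with the virtual relations. Throughout I use freely the relations $(\ref{rel-height-sv})$ and $(\ref{rel-height-bv})$ already established in the preceding lemmas of this section, so that $v_k$ commutes with $\sigma_i$ whenever $|i-k|\geq 2$ and $v_k$ commutes with $\gamma_j$ whenever $k\neq j-1,j$.

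The inductive step splits into three subcases. (A) If either $j\geq i+3$ or $2\leq j\leq i-1$, then $|i-(j-1)|\geq 2$; writing $\gamma_j=v_{j-1}\gamma_{j-1}v_{j-1}$ and commuting $v_{j-1}$ past $\sigma_i$ reduces the desired identity to $\sigma_i\gamma_{j-1}=\gamma_{j-1}\sigma_i$, which is available by induction since $(i,j-1)$ still satisfies $j-1\neq i,i+1$ and has smaller $i+j$. (B) If $j=i+2$ with $i\geq 2$, then $v_{j-1}=v_{i+1}$ blocks Subcase (A); instead write $\sigma_i=v_{i-1}v_i\sigma_{i-1}v_iv_{i-1}$ and commute $v_{i-1},v_i$ past $\gamma_{i+2}$ (legal since neither $i-1$ nor $i$ equals $i+1$ or $i+2$), reducing to $\sigma_{i-1}\gamma_{i+2}=\gamma_{i+2}\sigma_{i-1}$, which falls under Subcase (A). (C) If $j=1$ with $i\geq 3$, apply the same rewriting $\sigma_i=v_{i-1}v_i\sigma_{i-1}v_iv_{i-1}$; both $v_{i-1}$ and $v_i$ commute with $\gamma_1$ by $(\ref{rel-height-bv})$ since $i-1,i\geq 2$, reducing to $\sigma_{i-1}\gamma_1=\gamma_1\sigma_{i-1}$.

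Two base cases remain: $(i,j)=(2,1)$ and $(i,j)=(1,3)$. The first is immediate: rewriting $\sigma_2=v_1v_2\sigma_1v_2v_1$, the identity $\sigma_2\gamma_1=\gamma_1\sigma_2$ is literally relation $(\ref{relB-height-sb})$. For the second, I prove $\sigma_1\gamma_3=\gamma_3\sigma_1$ by left-multiplying by $v_1v_2$: using $\gamma_3=v_2v_1\gamma_1v_1v_2$ and the cancellation $v_1v_2\cdot v_2v_1=e$, the left side becomes $\sigma_2\gamma_1v_1v_2$ and the right side becomes $\gamma_1v_1v_2\sigma_1$. Swapping $\sigma_2\gamma_1\mapsto \gamma_1\sigma_2$ via $(\ref{relB-height-sb})$, re-expanding $\sigma_2=v_1v_2\sigma_1v_2v_1$, and using $v_2v_1v_1v_2=e$ together with $v_2^2=e$ collapse the left side to $\gamma_1v_1v_2\sigma_1$ as well. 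Cancelling the leading $v_1v_2$ yields $\sigma_1\gamma_3=\gamma_3\sigma_1$.

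The main obstacle is locating this irreducible base case $(1,3)$: the induction cannot be pushed below $(1,3)$ or $(2,1)$ without forcing the commutation of $\sigma_1$ with $\gamma_1$, which is not among the relations. Recognizing that $(\ref{relB-height-sb})$—the commutation of $\gamma_1$ with the three-strand element $\sigma_2=v_1v_2\sigma_1v_2v_1$—is precisely the conjugate form of the missing height relation, and that the manipulation above converts $(1,3)$ into this known relation, is the one genuinely new step of the argument.
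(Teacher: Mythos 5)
Your proof is correct, and every reduction step stays within the permitted relations: the one-step conjugation identities $\gamma_k=v_{k-1}\gamma_{k-1}v_{k-1}$ and $\sigma_k=v_{k-1}v_k\sigma_{k-1}v_kv_{k-1}$ do follow from $(\ref{2nd reduction})$, $(\ref{1st reduction})$ and the virtual relations, the commutations you invoke are exactly the already-derived relations $(\ref{rel-height-sv})$ and $(\ref{rel-height-bv})$, the measure $i+j$ strictly decreases in each of your subcases (A), (B), (C), and the only irreducible instances are indeed $(2,1)$ and $(1,3)$, both of which you ground in $(\ref{relB-height-sb})$. The paper organizes the argument differently: it first proves $\sigma_i\gamma_1=\gamma_1\sigma_i$ for all $i\neq 1$ by a separate induction on $i$ (base case $i=2$ being $(\ref{relB-height-sb})$, inductive step via $\sigma_{i+1}=v_iv_{i+1}\sigma_iv_{i+1}v_i$), and then handles general $j$ by conjugating $\gamma_j$ all the way down to $\gamma_1$ with the full word $(v_{j-1}\cdots v_1)\gamma_1(v_1\cdots v_{j-1})$ — in the case $j\geq i+2$ temporarily rewriting $\sigma_i$ as $v_{i+1}v_i\sigma_{i+1}v_iv_{i+1}$ so that the conjugating letters pass through. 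In particular the paper never isolates $(1,3)$ as a base case; that instance is absorbed into its Case (ii) by the $\sigma_1\mapsto\sigma_2$ rewriting, which is essentially the same manipulation you perform by hand when you left-multiply by $v_1v_2$. So the two proofs bottom out at the same key relation $(\ref{relB-height-sb})$; yours trades the paper's two-stage structure for a single well-founded induction with shorter conjugating words per step, at the cost of a slightly more delicate case analysis and the explicit extra base case. Both are valid; neither is materially shorter.
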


\begin{proof}
By previous lemmas, we may assume relations $(\ref{rel-height-sv})$ and $(\ref{rel-vsv})$ or equivalently $(\ref{relC-vsv})$.  

First we show $(\ref{rel-height-sb})$ when  $j=1$, i.e., 
$\sigma_i\gamma_1=\gamma_1\sigma_i$ for $i \neq 1$.  
We apply induction on $i$, with initial condition $i=2$.
The relation $\sigma_2\gamma_1=\gamma_1\sigma_2$ follows from $(\ref{1st reduction})$ and $(\ref{relB-height-sb})$. 

Assuming $\sigma_i\gamma_1=\gamma_1\sigma_i$, we obtain $\sigma_{i+1}\gamma_1=\gamma_1\sigma_{i+1}$ as follows: 
\begin{align*}
    \sigma_{i+1}\gamma_1 & = v_{i}v_{i+1}\sigma_{i}v_{i+1}\underline{v_{i}\gamma_1}\\
                    & = v_{i}v_{i+1}\sigma_{i}\underline{v_{i+1}\gamma_1}v_{i}\\
                    & = v_{i}v_{i+1}\underline{\sigma_{i}\gamma_1}v_{i+1}v_{i}\\
                    & = v_{i}\underline{v_{i+1}\gamma_1}\sigma_{i}v_{i+1}v_{i}\\
                    & = \underline{v_{i}\gamma_1}v_{i+1}\sigma_{i}v_{i+1}v_{i}\\
                    & = \gamma_1 v_{i}v_{i+1}\sigma_{i}v_{i+1}v_{i}\\
                    & = \gamma_1\sigma_{i+1}.
\end{align*}
Hence,\begin{equation}
    \sigma_i\gamma_1=\gamma_1\sigma_i \quad \text{ for } i \neq 1. \label{3rd reduction}
\end{equation}

Now, we show relations $(\ref{rel-height-sb})$: $\sigma_i\gamma_j = \gamma_j\sigma_i$ for $j\neq i, i+1$.

Case(i) Suppose $j\leq i-1$. Then 
\begin{align*}
    \sigma_i\gamma_j & = \underline{\sigma_i }(v_{j-1}\ldots v_1)\gamma_1(v_1 \ldots v_{j-1})\\
                & = (v_{j-1}\ldots v_1)\underline{\sigma_i \gamma_1}(v_1 \ldots v_{j-1}) ~~~~~~~~~~~ \text{ (by (\ref{3rd reduction}))}\\
                & = (v_{j-1}\ldots v_1)\gamma_1\underline{\sigma_i }(v_1 \ldots v_{j-1})\\
                & = (v_{j-1}\ldots v_1)\gamma_1(v_1 \ldots v_{j-1})\sigma_i\\
                & = \gamma_j\sigma_i.
\end{align*}

Case(ii) Suppose $j\geq i+2$. Then 
\begin{align*}
    \sigma_i\gamma_j & = \underline{\sigma_i }(v_{j-1}\ldots v_1)\gamma_1(v_1 \ldots v_{j-1})\\
                & = (v_{j-1}\ldots v_{i+2}) \underline{\sigma_i} (v_{i+1}\ldots v_1)\gamma_1(v_1 \ldots v_{j-1})\\ 
                & = (v_{j-1}\ldots v_{i+2}) v_{i+1} v_i \sigma_{i+1} \underline{v_i v_{i+1}} (\underline{v_{i+1} v_i} v_{i-1} \ldots v_1)\gamma_1(v_1 \ldots v_{j-1})\\                 
                & = (v_{j-1}\ldots v_{i})\underline{\sigma_{i+1} }(v_{i-1}\ldots v_1)\gamma_1(v_1 \ldots v_{j-1})\\
                & = (v_{j-1}\ldots v_{i})(v_{i-1}\ldots v_1)\underline{\sigma_{i+1}\gamma_1 }(v_1 \ldots v_{j-1})~~~~~~~~~~~~~~ \text{ (by (\ref{3rd reduction}))}\\
                & = (v_{j-1}\ldots v_{1}) \gamma_1 \underline{\sigma_{i+1}} (v_1 \ldots v_{j-1})\\
                & = (v_{j-1}\ldots v_{1}) \gamma_1 (v_1 \ldots v_{i-1}) \underline{\sigma_{i+1}} (v_i \ldots v_{j-1})\\
                & = (v_{j-1}\ldots v_{1}) \gamma_1 (v_1 \ldots v_{i-1}) v_i v_{i+1} \sigma_i \underline{v_{i+1} v_i} ( \underline{v_i v_{i+1}} v_{i+2} \ldots v_{j-1})\\
                & = (v_{j-1}\ldots v_{1}) \gamma_1 (v_1 \ldots v_{i+1}) \underline{\sigma_{i}} (v_{i+2} \ldots v_{j-1})\\
                & = (v_{j-1}\ldots v_{1}) \gamma_1 (v_1 \ldots v_{i+1})(v_{i+2} \ldots v_{j-1})\sigma_{i} \\
                & = \gamma_j\sigma_i.
\end{align*}
\end{proof}

\begin{lemma}
Relations $(\ref{rel-bv})$ 
follow from relations $(\ref{2nd reduction})$ and the virtual relations.  
\end{lemma}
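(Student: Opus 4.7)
The plan is to substitute the expressions from $(\ref{2nd reduction})$ directly into both sides of $(\ref{rel-bv})$ and collapse using $v_i^2 = e$. By $(\ref{2nd reduction})$ we have $\gamma_i = (v_{i-1}\ldots v_1)\gamma_1(v_1\ldots v_{i-1})$ and $\gamma_{i+1} = (v_i v_{i-1}\ldots v_1)\gamma_1(v_1\ldots v_{i-1} v_i)$, and I will compute $\gamma_{i+1} v_i$ and match it with $v_i \gamma_i$.

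The key step is essentially a one-line manipulation: the trailing $v_i$ in the expression for $\gamma_{i+1}$ cancels with the appended $v_i$ from the relation using $(\ref{relB-inverse-v})$, and then the leading $v_i$ in the prefix factors out to the left. Concretely,
\begin{align*}
\gamma_{i+1} v_i
&= (v_i v_{i-1}\ldots v_1)\,\gamma_1\,(v_1\ldots v_{i-1}\,\underline{v_i\, v_i}) \\
&= (v_i v_{i-1}\ldots v_1)\,\gamma_1\,(v_1\ldots v_{i-1}) \\
&= v_i\,(v_{i-1}\ldots v_1)\,\gamma_1\,(v_1\ldots v_{i-1}) \\
&= v_i \gamma_i,
\end{align*}
where the second equality uses $(\ref{relB-inverse-v})$ and the final equality re-applies $(\ref{2nd reduction})$.

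There is no real obstacle here. The case $i=1$ is handled identically, with the convention that the empty product $(v_0\ldots v_1)$ is $e$, so that $\gamma_1$ appears as itself and the calculation reduces to $\gamma_2 v_1 = v_1 \gamma_1 v_1 v_1 = v_1 \gamma_1$. Thus $(\ref{rel-bv})$ is an immediate consequence of $(\ref{2nd reduction})$ together with only the single virtual relation $v_i^2 = e$; the braid-type virtual relations $(\ref{relB-height-vv})$ and $(\ref{relB-vvv})$ are not even needed.
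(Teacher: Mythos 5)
Your proof is correct and is essentially identical to the paper's own argument: substitute the expression $(\ref{2nd reduction})$ for $\gamma_{i+1}$, cancel the trailing $v_i v_i$ via $(\ref{relB-inverse-v})$, and peel off the leading $v_i$ to recover $v_i\gamma_i$. Your added remarks about the $i=1$ case and the fact that only $v_i^2=e$ among the virtual relations is needed are accurate but not points of divergence.
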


\begin{proof}
\begin{align*}
   \gamma_{i+1} v_i 
& = (v_i \ldots v_1) \gamma_1 (v_1 \ldots \underline{v_i}) \underline{v_i} \\
& = v_i ( v_{i-1} \ldots v_1) \gamma_1 (v_1 \ldots v_{i-1}) \\
& = v_i \gamma_i. 
\end{align*}
\end{proof}

\begin{lemma}
Relations $(\ref{rel-twist-III})$ 
follow from  relations $(\ref{1st reduction})$, $(\ref{2nd reduction})$, the virtual relations, and  relations 
$(\ref{relB-height-bv})$ and $(\ref{relB-bv})$.   
\end{lemma}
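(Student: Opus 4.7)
My plan is to deduce the index-$i$ identity $(\ref{rel-twist-III})$ from the index-$1$ instance $(\ref{relB-bv})$ by an explicit conjugation argument. First, the previous lemma gives relations $(\ref{rel-bv})$, which combined with $v_i^2 = e$ yields $\gamma_{i+1} = v_i \gamma_i v_i$. Substituting this into the right-hand side of $(\ref{rel-twist-III})$ and cancelling the outer $v_i$ factors via $v_i^2 = e$, the target identity becomes equivalent to
\[
\gamma_i v_i \gamma_i \sigma_i \gamma_i v_i \gamma_i = \sigma_i. \qquad (\star)
\]
Hence it suffices to prove $(\star)$.

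Set $A_i = (v_{i-1} v_{i-2} \cdots v_1)(v_i v_{i-1} \cdots v_2)$. By $(\ref{1st reduction})$, $\sigma_i = A_i \sigma_1 A_i^{-1}$. I will establish the two companion conjugation formulas
\[
\gamma_i = A_i \gamma_1 A_i^{-1}, \qquad v_i = A_i v_1 A_i^{-1}.
\]
The first follows from $(\ref{2nd reduction})$ together with $(\ref{relB-height-bv})$: the subword $(v_i \cdots v_2)$ of $A_i$ involves only $v_j$ with $j \geq 2$, each of which commutes with $\gamma_1$, so this subword slides across $\gamma_1$ and annihilates against its inverse on the other side, leaving $(v_{i-1} \cdots v_1) \gamma_1 (v_1 \cdots v_{i-1}) = \gamma_i$. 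The second identity lies entirely inside the subgroup generated by the $v_j$'s and reflects the permutation-group fact that $A_i$ sends $\{1,2\}$ to $\{i, i+1\}$. I would prove it by induction on $i$: the base case $i=2$ is $v_1 v_2 v_1 v_2 v_1 = v_2$ via $(\ref{relB-vvv})$, and the inductive step uses the rewriting $A_{i+1} = v_i v_{i+1} A_i$ (obtained by commuting $v_{i+1}$ past the non-adjacent generators $v_1, \ldots, v_{i-1}$), followed by $v_i v_{i+1} v_i v_{i+1} v_i = v_{i+1}$, which is again a single application of $(\ref{relB-vvv})$.

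Granting these three conjugation formulas, the left-hand side of $(\star)$ factors as
\[
A_i \gamma_1 A_i^{-1} \cdot A_i v_1 A_i^{-1} \cdot A_i \gamma_1 A_i^{-1} \cdot A_i \sigma_1 A_i^{-1} \cdot A_i \gamma_1 A_i^{-1} \cdot A_i v_1 A_i^{-1} \cdot A_i \gamma_1 A_i^{-1} = A_i \bigl( \gamma_1 v_1 \gamma_1 \sigma_1 \gamma_1 v_1 \gamma_1 \bigr) A_i^{-1},
\]
and the bracketed word equals $\sigma_1$ by $(\ref{relB-bv})$. This gives $A_i \sigma_1 A_i^{-1} = \sigma_i$, proving $(\star)$ and therefore $(\ref{rel-twist-III})$. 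The main obstacle is the purely virtual identity $v_i = A_i v_1 A_i^{-1}$, since it is not one of the listed relations and requires the short induction sketched above; every other step is a routine substitution using $(\ref{rel-bv})$, $(\ref{relB-height-bv})$, or the reduction formulas $(\ref{1st reduction})$ and $(\ref{2nd reduction})$.
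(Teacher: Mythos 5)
Your proof is correct, and it reaches the same crux as the paper --- rewriting everything in terms of the index-$1$ generators so that the single relation $(\ref{relB-bv})$ can be applied in the middle --- but it gets there by a genuinely different route. The paper's proof is one long word computation: it expands $\gamma_{i+1}\gamma_i\sigma_i\gamma_i\gamma_{i+1}$ directly via $(\ref{1st reduction})$ and $(\ref{2nd reduction})$, shuffles the $v_j$'s by hand using the virtual relations and $(\ref{relB-height-bv})$ until the subword $\gamma_1 v_1\gamma_1\sigma_1\gamma_1 v_1\gamma_1$ appears, replaces it by $\sigma_1$, and reassembles the result into $v_i\sigma_i v_i$. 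You instead first normalize the statement to $\gamma_i v_i \gamma_i \sigma_i \gamma_i v_i \gamma_i = \sigma_i$ using $(\ref{rel-bv})$ from the preceding lemma (legitimate, since that lemma derives $(\ref{rel-bv})$ from $(\ref{2nd reduction})$ and the virtual relations alone), and then exhibit a single conjugator $A_i=(v_{i-1}\cdots v_1)(v_i\cdots v_2)$ carrying the index-$1$ instance to the index-$i$ instance. The price of this cleaner structure is the auxiliary purely virtual identity $v_i = A_i v_1 A_i^{-1}$, which is not among the listed relations and which the paper's brute-force computation never needs; your induction for it (base case $v_1v_2v_1v_2v_1=v_2$, step via $A_{i+1}=v_iv_{i+1}A_i$ and $v_iv_{i+1}v_iv_{i+1}v_i=v_{i+1}$) is sound and uses only $(\ref{relB-height-vv})$, $(\ref{relB-vvv})$ and $(\ref{relB-inverse-v})$. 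What your approach buys is conceptual transparency --- all three families $\sigma_i,\gamma_i,v_i$ are simultaneously conjugate by $A_i$ to their index-$1$ counterparts, so the whole relation telescopes --- at the cost of one extra lemma; the paper's approach is self-contained but opaque.
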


\begin{proof}
\begin{align*}
    \gamma_{i+1}\gamma_{i}\sigma_i\gamma_{i}\gamma_{i+1} 
 & = (v_i\ldots v_1)\gamma_1(v_1\ldots v_i)(v_{i-1} \ldots v_1)\gamma_1 \underline{(v_1 \ldots v_{i-1})(v_{i-1} \ldots v_1)}(v_{i} \ldots v_2)\\
    & \sigma_1(v_2 \ldots v_{i}) \underline{(v_1 \ldots v_{i-1})(v_{i-1} \ldots v_1)} \gamma_1(v_1 \ldots v_{i-1})(v_i \ldots v_1)\gamma_1(v_1 \ldots v_i)\\
 & = (v_i\ldots v_1)\gamma_1\underline{(v_1\ldots v_{i-1} v_i v_{i-1} \ldots v_1)} \gamma_1(v_{i} \ldots v_2)\sigma_1(v_2 \ldots v_{i})\gamma_1\\
     & \underline{(v_1 \ldots v_{i-1} v_i v_{i-1} \ldots v_1)}\gamma_1(v_1 \ldots v_i)\\
 & = (v_i\ldots v_1)\underline{\gamma_1}(v_i\ldots v_2 v_1 v_2 \ldots v_i)\underline{\gamma_1}(v_{i} \ldots v_2)\sigma_1(v_2 \ldots v_{i}) \underline{\gamma_1}\\
     & (v_i \ldots v_2 v_1 v_2 \ldots v_i) \underline{\gamma_1}(v_1 \ldots v_i)\\
     & = (v_i\ldots v_1)(v_i\ldots v_{2})\gamma_1 v_1\underline{(v_{2} \ldots v_i)(v_{i} \ldots v_2)}\gamma_1\sigma_1\gamma_1\underline{(v_2 \ldots v_{i})(v_i \ldots v_{2})}\\
     & v_1 \gamma_1 (v_{2} \ldots v_i)(v_1 \ldots v_i)\\
     & = (v_i\ldots v_1)(v_i\ldots v_{2})\underline{\gamma_1 v_1\gamma_1\sigma_1\gamma_1v_1\gamma_1}(v_{2} \ldots v_i)(v_1 \ldots v_i)\\
     & = v_i\underline{(v_{i-1}\ldots v_1)(v_i\ldots v_{2})\sigma_1(v_{2} \ldots v_i)(v_1 \ldots v_{i-1})}v_i\\
     & = v_i\sigma_iv_i.
\end{align*}
\end{proof}

\begin{proof}[Proof of Theorem~\ref{thm:ReducedPresentation}]
In the twisted virtual braid group, it is verified that all relations $(\ref{1st reduction})$--$(\ref{relB-bv})$ are valid by a geometrical argument using diagrams or algebraic argument using the relations $(\ref{rel-height-ss})$--$(\ref{rel-twist-III})$. On the other hands, we see that the relations $(\ref{rel-height-ss})$--$(\ref{rel-twist-III})$ follow from the relations $(\ref{1st reduction})$--$(\ref{relB-bv})$ by the previous lemmas. 
\end{proof}


\section*{Concluding remarks}

In this paper we study twisted virtual braids and the twisted virtual braid group, and provide theorems for twisted links corresponding to the Alexander theorem and the Markov theorem. We also provide a group presentation and a reduced group presentation of the twisted virtual braid group. 
As future work, it will be interesting to study the pure twisted virtual braid group and construct invariants for twisted virtual braids and twisted links. 
For example, biquandles with structures related to twisted links introduced in \cite{ns} may be discussed by using twisted virtual braids. 


\section*{Acknowledgements}

K.~Negi would like to thank the University Grants Commission(UGC), India, for Research Fellowship with NTA Ref.No.191620008047. M.~Prabhakar acknowledges the support given by the Science and Engineering Board(SERB), Department of Science $\&$ Technology, Government of India under grant-in-aid Mathematical Research Impact Centric Support(MATRICS) with F.No.MTR/2021/00394. This work was partially supported by the FIST program of the Department of Science and Technology, Government of India, Reference No. SR/FST/MS-I/2018/22(C), and was supported by JSPS KAKENHI Grant Number JP19H01788.

%
%

\end{document}